\documentclass[a4paper,12pt]{amsart}
\usepackage[english]{babel}
\usepackage{amsmath,amsthm,amssymb,amsfonts}
\usepackage{multicol}
\usepackage[titletoc]{appendix}
\usepackage{newpxmath}
\usepackage[pdftex]{color}
\usepackage[bookmarks=true,hyperindex,pdftex,colorlinks, citecolor=blue,linkcolor=blue, urlcolor=blue]{hyperref}
\usepackage[dvipsnames]{xcolor}
\usepackage{mathtools}
\usepackage{graphicx}
\usepackage{extarrows}

\usepackage[normalem]{ulem}
\usepackage{calc}

\usepackage{accents}
\newcommand{\dbtilde}[1]{\accentset{\approx}{#1}}
\parskip=0.5ex
\textwidth=16cm
\hoffset=-1.5cm

\newtheorem{theorem}{Theorem}[section]
\newtheorem{lemma}[theorem]{Lemma}

\newtheorem{proposition}[theorem]{Proposition}

\theoremstyle{definition}

\newtheorem{remark}[theorem]{Remark}
\numberwithin{equation}{section}

\newcommand{\eqw}{\xlongequal{w^*}}

\newcommand{\N}{\mathbb{N}}
\newcommand{\K}{\mathbb{K}}

\DeclareMathOperator{\Id}{Id}

\newcommand{\nn}[1]{{\left\vert\kern-0.25ex\left\vert\kern-0.25ex\left\vert #1 
		\right\vert\kern-0.25ex\right\vert\kern-0.25ex\right\vert}}

\renewcommand{\geq}{\geqslant}
\renewcommand{\leq}{\leqslant}

\newcommand{\NA}{\operatorname{NA}}
\newcommand{\NRA}{\operatorname{NRA}}

\newcommand{\pten}{\ensuremath{\widehat{\otimes}_\pi}}

\newcommand{\eps}{\varepsilon}

\begin{document}
\setcounter{tocdepth}{1}


\title{A note on numerical radius attaining mappings} 

\author[Jung]{Mingu Jung}
\address[Jung]{School of Mathematics, Korea Institute for Advanced Study, 02455 Seoul, Republic of Korea \newline
\href{http://orcid.org/0000-0003-2240-2855}{ORCID: \texttt{0000-0003-2240-2855} }}
\email{\texttt{jmingoo@kias.re.kr}}
\urladdr{\url{https://clemg.blog/}}

\thanks{Mingu Jung was supported by a KIAS Individual Grant (MG086601) at Korea Institute for Advanced Study.}

\keywords{numerical radius, polynomials, compact approximation property}

\subjclass[2020]{Primary 47A12; Secondary 46B10, 46G25}

\begin{abstract}  
We prove that if every bounded linear operator (or $N$-homogeneous polynomials) on a Banach space $X$ with the compact approximation property attains its numerical radius, then $X$ is a finite dimensional space. Moreover, we present an improvement of the polynomial James’ theorem for numerical radius proved by Acosta, Becerra Guerrero and Gal\'an \cite{AGG2003}. Finally, the denseness of weakly (uniformly) continuous $2$-homogeneous polynomials on a Banach space whose Aron-Berner extensions attain their numerical radii is obtained. 
\end{abstract}
\maketitle


\section{Introduction} 
Let $X$ and $Y$ be Banach spaces and denote by $\mathcal{L}(X,Y)$ (resp., $\NA(X,Y)$) the space of all bounded linear operators (resp., the set of all \emph{norm attaining operators}) from $X$ into $Y$. Recall that a bounded linear operator $T \in \mathcal{L}(X,Y)$ is said to be a \emph{norm attaining operator} if the operator norm $\|T\|$ is equal to $\max \{ \|T(x) \| : \|x\| \leq 1\}$. When $X=Y$, we simply denote them by $\mathcal{L}(X)$ and $\NA(X)$. One of the famous unsolved problems in Banach space theory is to characterize when the space $\mathcal{L}(X,Y)$ coincides with the set $\NA(X,Y)$, and some partial solutions for this problem have been given by J. Holub \cite{H2} and J. Mujica \cite{Muj}. To the best of our knowledge, the most general result, which is obtained recently in \cite{DJM}, states in particular the following: if $X$ is a reflexive space and $Y$ is an arbitrary Banach space for which either $X$ or $Y$ has the \emph{compact approximation property}, then $\mathcal{L}(X, Y) = \NA(X,Y)$ is equivalent to that every operator from $X$ into $Y$ is a compact operator. Thus, for a Banach space $X$ with the compact approximation property, the situation when $\mathcal{L}(X) = \NA(X)$ implies that $X$ must be finite dimensional. 

Paralleling the study on norm attaining operators, there have been numerous studies on \emph{numerical radius attaining operators} \cite{Acosta1991, AR, AP1989, AP1989-2, BS, Cardassi, Paya}. Recall that the \emph{numerical radius of an operator} $T \in \mathcal{L}(X)$ is defined by 
\begin{equation}\label{eq:nu}
\nu(T) := \sup \big\{ |x^*(T(x))|: (x, x^*) \in \Pi(X) \big\},
\end{equation}
where $\Pi(X) := \{ (x, x^*) \in S_X \times S_{X^*}: x^*(x) = 1 \}$. We say that $T \in \mathcal{L}(X)$ is a \emph{numerical radius attaining operator} if there exists some element $(x,x^*) \in \Pi(X)$ such that $\nu(T) = |x^* (T(x))|$. We denote by $\NRA(X)$ the set of all numerical radius attaining operators on $X$. 
Among others, it is proved in \cite{AR} that if all the rank-one operators on a Banach space $X$ attain their numerical radii, then $X$ must be reflexive, which can be viewed as a version of the celebrated James' theorem \cite{James} for numerical radius. 

One of the main motivations of this paper is to investigate Banach spaces $X$ for which $\mathcal{L}(X) = \NRA(X)$. In Section \ref{sec:1}, we shall observe that for a Banach space $X$, if $\mathcal{L}(X) = \NRA(X)$, then $X$ must be a separable Banach space, which is the numerical radius version of Kalton's result \cite{K}. Moreover, we prove that if $X$ has the compact approximation property and $\mathcal{L}(X) = \NRA(X)$, then $X$ must be a finite dimensional space. 
Following that, we are devoted to the case of $N$-homogeneous polynomials. As it is defined in \eqref{eq:nu}, the \emph{numerical radius of an $N$-homogeneous polynomial} $P$ from a Banach space $X$ into itself is given by 
\[
\nu (P) := \sup \big\{ |x^*(P(x))|: (x, x^*) \in \Pi(X) \big\}. 
\]
We say that $P$ is a \emph{numerical radius attaining $N$-homogeneous polynomial} if $\nu(P) = |x^* (P(x))|$ for some $(x,x^*) \in \Pi(X)$. Let us denote by $\mathcal{P}(^N X)$ and $\NRA(^N X)$ the Banach space of $N$-homogeneous polynomials from $X$ into $X$ and the set of all numerical radius attaining $N$-homogeneous polynomials, respectively. In Section \ref{sec:2}, we give an improvement of the polynomial James' theorem for numerical radius proved in \cite{AGG2003} by using the refinement of James' theorem in \cite{JiM}. Similarly as in the previous section, we prove that for a Banach space $X$ with the compact approximation property, if $\mathcal{P}(^N X) = \NRA(^N X)$, then $X$ must be a finite dimensional space. 
Finally, Section \ref{sec:3} focuses on the denseness of the set of weakly (uniformly) continuous $2$-homogeneous polynomials whose Aron-Berner extensions attain their numerical radii. This can be viewed as a numerical radius version of the result in \cite{CLS2010} where it is proved that the set of $2$-homogeneous polynomials whose Aron-Berner extensions attain their norms is dense.

\section{Numerical radius attaining operators}\label{sec:1}
Let $X$ be a Banach space. Then it is clear that $\nu (T) \leq \|T\|$ for every $T \in \mathcal{L}(X)$ and that $\nu$ is a seminorm on $\mathcal{L}(X)$. The greatest constant $k \geq 0$ such that $k \|T\| \leq \nu(T)$ for every $T \in \mathcal{L}(X)$ is the well known constant which is called the \emph{numerical index} of $X$ and denoted by $n(X)$. 
Equivalently, 
\begin{equation*}
n(X) = \inf \big\{ \nu(T): T \in \mathcal{L}(X), \|T\| = 1 \big\}. 
\end{equation*}
If we consider 
\begin{equation*}
\mathcal{Z}(X) := \big\{ S \in \mathcal{L}(X): \nu (S) = 0 \big\},
\end{equation*}
which is a closed subspace of $\mathcal{L}(X)$, and the quotient space $\mathcal{L}(X)/\mathcal{Z}(X)$ endowed with the \emph{norm} $\nu(T+\mathcal{Z}(X)) = \inf \{ \nu (T-S): S \in \mathcal{Z}(X)\}$, then it is straightforward to check the following:
\begin{itemize}
\itemsep0.3em
	\item[(a)] $\nu (T+\mathcal{Z}(X)) = \nu(T)$ for every $T \in \mathcal{L}(X)$, and 
	\item[(b)] $\nu (T+\mathcal{Z}(X)) \leq \|T + \mathcal{Z}(X)\| = \inf \{ \|T - S\|: S \in \mathcal{Z}(X)\}$ for every $T \in \mathcal{L}(X)$.
\end{itemize}

For Banach spaces $X$ and $Y$, recall that $\mathcal{L}(X,Y^*)$ is the dual of the projective tensor product space $X \pten Y$. In our scenario, we shall observe that for a reflexive Banach space $X$ the normed space $(\mathcal{L}(X)/\mathcal{Z}(X), \nu)$ is the dual of a kind of tensor product space.
To this end, consider the following normed space 
\begin{equation*}
X \otimes_{\Pi} X^* := \Big\{ \sum_{i=1}^n \lambda_i x_i \otimes x_i^*: n \in \N, \ \lambda_i \in \K, \ (x_i, x_i^*) \in \Pi(X) \Big\} 
\end{equation*} 
endowed with the norm 
\begin{equation}\label{eq:Pi-norm}
\|u\| := \inf \Big\{ \sum_{i=1}^n |\lambda_i|: u = \sum_{i=1}^n \lambda_i x_i \otimes x_i^*,  \ (x_i, x_i^*) \in \Pi(X), \ \lambda_i \in \K, \ n \in \N \Big\}. 
\end{equation}
We denote by $X \widehat{\otimes}_{\Pi} X^*$ the completion of $X \otimes_{\Pi} X^*$ with respect to the norm in \eqref{eq:Pi-norm}. Let, as usual, $\mathcal{B} (X \times X^*)$ stand for the set of all bilinear forms on $X \times X^*$ and consider the seminorm $\| \cdot \|_{\Pi}$ on $\mathcal{B} (X \times X^*)$ given by $\| B \|_{\Pi} := \sup \{ |B(x,x^*)|: (x,x^*) \in \Pi (X) \}$.

\begin{theorem}\label{thm:isometry}
Let $X$ be a reflexive Banach space. Then 
\[
(\mathcal{L}(X)/\mathcal{Z}(X), \nu) \stackrel{1}{=} (\mathcal{B}(X \times X^*), \|\cdot\|_{\Pi}) / \ker \| \cdot \|_{\Pi} \stackrel{1}{=} (X \widehat{\otimes}_{\Pi} X^*)^* \,\,\, (\text{isometric isomorphically}).
\]
\end{theorem}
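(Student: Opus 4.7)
My plan is to establish the two isometric equalities separately, starting with the right-hand one which is essentially formal, then handling the left-hand one where reflexivity is crucial.

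For $(\mathcal{B}(X\times X^*),\|\cdot\|_\Pi)/\ker\|\cdot\|_\Pi \stackrel{1}{=} (X\widehat\otimes_\Pi X^*)^*$, I would assign to each bilinear form $B$ the functional $\widehat{B}$ on $X\otimes_\Pi X^*$ defined by
\[
\widehat{B}\Bigl(\sum_i \lambda_i\, x_i\otimes x_i^*\Bigr) := \sum_i \lambda_i\, B(x_i,x_i^*),
\]
well-defined because $X\otimes_\Pi X^*$ embeds as a vector subspace of $X\otimes X^*$, on which any bilinear form canonically induces a linear functional. A direct computation gives $\|\widehat{B}\|_{(X\widehat\otimes_\Pi X^*)^*}=\|B\|_\Pi$: the upper bound follows from the triangle inequality applied to $\Pi$-representations, the lower bound from evaluating at $\Pi$-elementary tensors $x\otimes x^*$ (which have $\Pi$-norm at most $1$). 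Hence $B\mapsto\widehat{B}$ descends to an isometric embedding of the quotient into $(X\widehat\otimes_\Pi X^*)^*$. For surjectivity I would take any $\varphi\in(X\widehat\otimes_\Pi X^*)^*$, restrict it to $X\otimes_\Pi X^*$, and extend algebraically (by Zorn's lemma) to a linear functional on $X\otimes X^*$, producing a bilinear form $B$ with $\widehat{B}=\varphi$.

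For $(\mathcal{L}(X)/\mathcal{Z}(X),\nu)\stackrel{1}{=}\mathcal{B}(X\times X^*)/\ker\|\cdot\|_\Pi$, I would define $\Psi(T):=B_T$ where $B_T(x,x^*):=x^*(Tx)$. Then $\|B_T\|_\Pi=\nu(T)$ directly from the definition of the numerical radius, and $\Psi(T)\in\ker\|\cdot\|_\Pi$ precisely when $T\in\mathcal{Z}(X)$, yielding an isometric linear embedding $(\mathcal{L}(X)/\mathcal{Z}(X),\nu)\hookrightarrow \mathcal{B}/\ker\|\cdot\|_\Pi$. Reflexivity enters here via the identification $\mathcal{L}(X)=\mathcal{L}(X,X^{**})$, under which bounded operators correspond to bounded bilinear forms on $X\times X^*$; thus surjectivity of $\Psi$ modulo $\ker$ amounts to showing that every equivalence class in $\mathcal{B}/\ker\|\cdot\|_\Pi$ contains a bounded bilinear form, or equivalently, via the first part, that $T\mapsto\varphi_T$ maps $\mathcal{L}(X)$ onto $(X\widehat\otimes_\Pi X^*)^*$.

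The crux is this last surjectivity. The map $T\mapsto\varphi_T$ is the Banach adjoint $\hat\iota^*$ of the continuous extension $\hat\iota:X\widehat\otimes_\Pi X^*\to X\widehat\otimes_\pi X^*$ of the subspace inclusion (continuous since $\|\cdot\|_\pi\le\|\cdot\|_\Pi$ on $X\otimes_\Pi X^*$), under the reflexivity-based duality $\mathcal{L}(X)=(X\widehat\otimes_\pi X^*)^*$. Consequently $\hat\iota^*$ is surjective iff $\hat\iota$ is a topological embedding with closed range, equivalently iff $\|\cdot\|_\pi$ and $\|\cdot\|_\Pi$ are equivalent on $X\otimes_\Pi X^*$. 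My plan to establish this equivalence is via the bipolar theorem: the set $\{T\in\mathcal{L}(X):\nu(T)\le 1\}$ is weak-$*$ closed in $\mathcal{L}(X)=(X\widehat\otimes_\pi X^*)^*$, being the intersection of the weak-$*$ closed halfspaces $\{T:|x^*(Tx)|\le 1\}$ over $(x,x^*)\in\Pi(X)$, and its polar in $X\widehat\otimes_\pi X^*$ is precisely the $\pi$-closed absolutely convex hull of $\{x\otimes x^*:(x,x^*)\in\Pi(X)\}$. Comparing this hull with the unit ball of $X\widehat\otimes_\Pi X^*$ (the $\Pi$-closure of the same absolutely convex set) gives the required identification of preduals, and chaining the norm calculations $\|B_T\|_\Pi=\nu(T)$ and $\|\widehat{B}\|=\|B\|_\Pi$ yields the full isometric equality.
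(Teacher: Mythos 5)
Your treatment of the second identification $(\mathcal{B}(X\times X^*),\|\cdot\|_\Pi)/\ker\|\cdot\|_\Pi\stackrel{1}{=}(X\widehat{\otimes}_\Pi X^*)^*$ and of the isometric embedding $(\mathcal{L}(X)/\mathcal{Z}(X),\nu)\hookrightarrow\mathcal{B}(X\times X^*)/\ker\|\cdot\|_\Pi$ coincides with the paper's: same norm computations, same algebraic (Hamel-basis) extension for surjectivity onto the dual. The divergence, and the problem, is in how you handle the remaining surjectivity, i.e.\ why every class in $\mathcal{B}(X\times X^*)/\ker\|\cdot\|_\Pi$ (equivalently every $L\in(X\widehat{\otimes}_\Pi X^*)^*$) is represented by a \emph{bounded} operator. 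You correctly reduce this to showing that $\hat\iota:X\widehat{\otimes}_\Pi X^*\to X\widehat{\otimes}_\pi X^*$ is bounded below, i.e.\ that $\|\cdot\|_\pi$ and $\|\cdot\|_\Pi$ are equivalent on $X\otimes_\Pi X^*$, but your bipolar argument does not prove this. The bipolar theorem tells you that the pre-polar of $\{T:\nu(T)\le 1\}$ is the \emph{$\pi$-closed} absolutely convex hull of $A=\{x\otimes x^*:(x,x^*)\in\Pi(X)\}$, while the unit ball of $X\widehat{\otimes}_\Pi X^*$ is the \emph{$\Pi$-closed} hull of the same set. Showing that these two hulls are comparable (after intersecting with the relevant subspace) \emph{is} the norm equivalence you are trying to establish; the word ``comparing'' hides exactly the missing step, so the argument is circular. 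Note also how strong the claim is: by duality, equivalence of $\|\cdot\|_\pi$ and $\|\cdot\|_\Pi$ on $X\otimes_\Pi X^*$ is the same as equivalence of $\nu$ with the quotient operator norm on $\mathcal{L}(X)/\mathcal{Z}(X)$, a quantitative statement in the spirit of ``positive numerical index modulo $\mathcal{Z}(X)$'' that certainly does not follow from reflexivity by soft convexity arguments.

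The paper's proof is structured precisely to sidestep this issue. It never attempts to realize a functional $L\in(X\widehat{\otimes}_\Pi X^*)^*$ by a bounded operator directly: it extends $L|_{X\otimes_\Pi X^*}$ algebraically to a bilinear form $\dbtilde{L}$ on $X\times X^*$ which is only known to be bounded on $\Pi(X)$ (not on $B_X\times B_{X^*}$), and the intermediate quotient $\mathcal{B}(X\times X^*)/\ker\|\cdot\|_\Pi$ is exactly the device that absorbs such forms. Your plan replaces this with the assertion that a bounded representative always exists, which is a genuinely stronger statement and the one step for which you give no proof. To repair your argument you would either have to actually prove the lower bound $\|u\|_\pi\ge c\,\|u\|_\Pi$ on $X\otimes_\Pi X^*$ (there is no reason a small $\pi$-perturbation of an element of $\operatorname{span}A$ admits a representation $\sum_i\lambda_i x_i\otimes x_i^*$ with $(x_i,x_i^*)\in\Pi(X)$ and small $\sum_i|\lambda_i|$), or abandon the reduction to $\hat\iota^*$ and pass, as the paper does, through possibly unbounded bilinear forms.
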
 

\begin{proof}
We claim that the mapping $\Phi : (\mathcal{L}(X)/\mathcal{Z}(X), \nu) \rightarrow (\mathcal{B}(X \times X^*), \|\cdot\|_{\Pi}) / \ker \| \cdot \|_{\Pi}$ given by 
\[
\Phi (T + \mathcal{Z} (X)) := \varphi_T + \ker \| \cdot \|_{\Pi},
\]
where $\varphi_T (x,x^*) = x^* (T(x))$, is an isometric isomorphism. It is not difficult to check that $\Phi$ is a well-defined linear operator. Note that for any $S \in \mathcal{Z}(X)$, we have $\varphi_S \in \ker \| \cdot \|_{\Pi}$. Conversely, if $\psi \in \ker \| \cdot \|_{\Pi}$, then the operator $L_{\psi} \in \mathcal{L}(X)$ given by $L_{\psi} (x) = \psi (x, \cdot)$ belongs to $\mathcal{Z}(X)$.  Moreover, 
\begin{align*}
\| \Phi (T + \mathcal{Z} (X)) \| = \| \varphi_{T} + \ker\| \cdot\|_{\Pi} \| &= \inf \{ \| \varphi_T + \psi \|_\Pi : \psi \in \ker \| \cdot \|_{\Pi} \} \\
&= \inf \{ \| \varphi_T + \varphi_S\|_{\Pi} : S \in \mathcal{Z} (X) \} \\
&= \inf \{ \nu (T+S) : S \in \mathcal{Z} (X)\} = \nu (T + \mathcal{Z} (X)). 
\end{align*}
To see that $\Phi$ is surjective, take any $B \in \mathcal{B}(X\times X^*)$. As $X$ is reflexive, the operator $T_B$ given by $T_B (x) = B(x,\cdot)$ belongs to $\mathcal{L}(X)$. It is straightforward to check that $\Phi (T_B + \mathcal{Z}(X))= B + \ker \| \cdot\|_{\Pi}$. 
Thus, the claim is proved. 

Next, define $\Psi : (\mathcal{B}(X \times X^*), \|\cdot\|_{\Pi}) / \ker \| \cdot \|_{\Pi} \rightarrow (X \widehat{\otimes}_{\Pi} X^*)^*$ by 
\[
\Psi (\varphi + \ker \| \cdot \|_{\Pi} ) (u) := \sum_{i=1}^n \lambda_i \varphi (x_i, x_i^*) 
\]
where $u = \sum_{i=1}^n \lambda_i x_i \otimes x_i^*$. The map $\Psi$ is a well-defined linear operator. Observe that 
\begin{align*}
| \Psi (\varphi + \ker \| \cdot\|_{\Pi}) (u) | &= \Big | \sum_{i=1}^n \lambda_i \varphi (x_i, x_i^*) \Big| =\Big | \sum_{i=1}^n \lambda_i (\varphi + \xi) (x_i, x_i^*) \Big| \leq \sum_{i=1}^n |\lambda_i| \| \varphi + \xi \|
\end{align*} 
for every $\xi \in \ker \| \cdot \|_{\Pi}$ and any representation $u = \sum_{i=1}^n \lambda_i x_i \otimes x_i^*$. It follows that 
\[
| \Psi (\varphi + \ker \| \cdot\|_{\Pi}) (u) | \leq \| \varphi + \xi\| \|u\| \text{ for every } \xi \in \ker \| \cdot \|_{\Pi};
\]
hence $\|\Psi (\varphi + \ker \| \cdot\|_{\Pi})\| \leq \| \varphi + \ker \| \cdot \|_{\Pi}\|$. On the other hand,
\begin{align*}
\|\Psi (\varphi + \ker \| \cdot\|_{\Pi})\| &\geq \sup \{ | \Psi (\varphi + \ker \| \cdot\|_{\Pi}) (x \otimes x^*)| : (x, x^*) \in \Pi (X) \} \\
&= \sup \{ | \varphi (x, x^*)| : (x, x^*) \in \Pi (X) \} \\
&= \| \varphi  \|_{\Pi}.
\end{align*} 
Thus, $\|\Psi (\varphi + \ker \| \cdot\|_{\Pi})\| \geq \| \varphi + \ker \| \cdot \|_{\Pi}\|$. 
It remains to check that $\Psi$ is surjective. Let $L \in (X \widehat{\otimes}_{\Pi} X^*)^*$ be fixed. Note that $X {\otimes}_{\Pi} X^*$ is an algebraic subspace of the projective tensor product space $X \widehat{\otimes}_{\pi} X^*$. Considering an algebraic complement of $X {\otimes}_{\Pi} X^*$, we can consider the algebraic linear extension $\tilde{L}$ of $L \vert_{X {\otimes}_{\Pi} X^*}$ to $X \widehat{\otimes}_{\pi} X^*$ (which is not necessarily continuous). Now, consider the map $\dbtilde{L} : X \times X^* \rightarrow \mathbb{K}$ given by $\dbtilde{L}(x,x^*) = \tilde{L} (x \otimes x^*)$ which is a bilinear form on $X \times X^*$. Note that 
\[
|\dbtilde{L} (x,x^*)| = |\tilde{L}(x \otimes x^*)| = |L(x\otimes x^*)| \leq \|L\| \text{ for every } (x,x^*) \in \Pi(X); 
\]
hence $\dbtilde{L}$ is a member of $(\mathcal{B}(X \times X^*), \|\cdot\|_{\Pi})$ with $\| \dbtilde{L} \|_{\Pi} \leq \| L \|$. As it is clear that $\Psi (\dbtilde{L} + \ker \| \cdot \|_{\Pi}) = L$, the surjectivity of $\Psi$ is proved.
\end{proof}

\begin{remark}
We remark that the above Theorem \ref{thm:isometry} generalizes the observation in the proof of \cite[Proposition 23]{DKLM-21}, where the authors showed that if $X$ is a reflexive Banach space with $n(X) >0$, then $(\mathcal{L}(X), \nu)$ is isometrically isomorphic to $(\mathcal{B}(X \times X^*), \|\cdot\|_{\Pi})$ and to $(X \widehat{\otimes}_{\Pi} X^*)^*$. 
\end{remark}

\begin{proposition}\label{prop:reflexive} Let $X$ be a Banach space. If $\mathcal{L}(X) = \NRA(X) $, then $(\mathcal{L}(X)/\mathcal{Z}(X), \nu)$ is reflexive. If, in addition, $n(X) >0$, then $\mathcal{L}(X)$ is reflexive. 
\end{proposition}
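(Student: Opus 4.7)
The plan is to combine the identification in Theorem~\ref{thm:isometry} with two applications of James' theorem: a numerical-radius variant at the outset and the classical version at the end.

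Since the hypothesis $\mathcal{L}(X)=\NRA(X)$ in particular forces every rank-one operator on $X$ to attain its numerical radius, the numerical-radius version of James' theorem from \cite{AR} (quoted in the introduction) yields that $X$ is reflexive. This preliminary reflexivity is exactly what unlocks Theorem~\ref{thm:isometry}, giving the isometric identification $(\mathcal{L}(X)/\mathcal{Z}(X),\nu)\stackrel{1}{=}(X\widehat{\otimes}_\Pi X^*)^*$.

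Since reflexivity passes between a Banach space and its dual, to conclude it suffices to show that the predual $X\widehat{\otimes}_\Pi X^*$ is reflexive, and I would do this via the classical James' theorem: every $L\in(X\widehat{\otimes}_\Pi X^*)^*$ must attain its norm on the closed unit ball. Given such an $L$, use the surjectivity of $\Psi$ from Theorem~\ref{thm:isometry} to write $L=\Psi(\varphi_T+\ker\|\cdot\|_\Pi)$ for some $T\in\mathcal{L}(X)$, with $\|L\|=\nu(T)$. By hypothesis $T\in\NRA(X)$, so there exists $(x_0,x_0^*)\in\Pi(X)$ with $|x_0^*(T(x_0))|=\nu(T)$. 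The element $u:=x_0\otimes x_0^*$ lies in the closed unit ball of $X\widehat{\otimes}_\Pi X^*$ by the trivial one-term decomposition in the definition of the $\Pi$-norm, and satisfies $|L(u)|=|x_0^*(T(x_0))|=\|L\|$. Hence $L$ attains its norm, James' theorem gives reflexivity of $X\widehat{\otimes}_\Pi X^*$, and therefore of its dual $(\mathcal{L}(X)/\mathcal{Z}(X),\nu)$.

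For the additional statement, if $n(X)>0$ then $\nu\geq n(X)\|\cdot\|$ on $\mathcal{L}(X)$, which forces $\mathcal{Z}(X)=\{0\}$ and makes $\nu$ equivalent to the operator norm. Consequently $\mathcal{L}(X)$ and $(\mathcal{L}(X)/\mathcal{Z}(X),\nu)$ coincide as topological vector spaces, so $\mathcal{L}(X)$ inherits reflexivity. I anticipate the only real conceptual obstacle to be the initial bootstrap of reflexivity of $X$, which is indispensable because Theorem~\ref{thm:isometry} is stated only in the reflexive setting; fortunately this is exactly the rank-one case of the theorem of \cite{AR}, and the remainder of the argument is a rather mechanical application of James' theorem to the predual supplied by that identification.
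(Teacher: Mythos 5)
Your proposal is correct and follows essentially the same route as the paper: reflexivity of $X$ via the rank-one case of \cite{AR}, the identification from Theorem~\ref{thm:isometry}, norm-attainment of every functional on $X\widehat{\otimes}_\Pi X^*$ at an elementary tensor $x_0\otimes x_0^*$, and the classical James' theorem. The only difference is that you spell out the (routine) second assertion about $n(X)>0$, which the paper leaves implicit.
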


\begin{proof} 
Notice that $X$ is reflexive due to \cite[Theorem 1]{AR} since, in particular, every rank-one operator attains its numerical radius.
By Theorem \ref{thm:isometry}, we have that $(\mathcal{L}(X)/\mathcal{Z}(X), \nu)$ and $(X \widehat{\otimes}_{\Pi} X^*)^*$ are isometrically isomorphic. 
Take $L \in (X \widehat{\otimes}_{\Pi} X^*)^*$ and consider $T+\mathcal{Z}(X)$ the corresponding element in $(\mathcal{L}(X)/\mathcal{Z}(X), \nu)$. 
Say $\nu (T) = |x^* (T(x))|$ for some $(x,x^*) \in \Pi (X)$. Then 
\[
|L(x\otimes x^*)| = |x^* (T(x))| = \nu (T) = \nu (T + \mathcal{Z}(X)) = \| L \|,
\]
which implies that $L$ attains its norm. Since $L$ is chosen arbitrarily, this shows that $X \widehat{\otimes}_{\Pi} X^*$ is reflexive by James' theorem \cite{James}; hence so is $(\mathcal{L}(X)/\mathcal{Z}(X), \nu)$.
\end{proof}

N.J. Kalton proved in \cite[Theorem 2]{K} that if $\mathcal{L} (X)$ is reflexive, then $X$ must be a separable (reflexive) space. Having this fact in mind, one direct consequence of Proposition \ref{prop:reflexive} is that if $\mathcal{L}(X) = \NRA(X)$ and $n(X) >0$, then $X$ must be separable. However, the following result shows that the assumption that $n(X) >0$ is in fact superfluous. The proof is motivated by the one of aforementioned result of N.J. Kalton. 

\begin{proposition} Let $X$ be a Banach space. If $\mathcal{L}(X) = \NRA(X) $, then $X$ must be a separable reflexive space. 
\end{proposition}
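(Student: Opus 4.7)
The plan is to prove the contrapositive: assume $X$ is non-separable, and exhibit an operator $T \in \mathcal{L}(X) \setminus \NRA(X)$. Reflexivity of $X$ is already granted by Proposition~\ref{prop:reflexive} (via the rank-one case and \cite{AR}), so I work under the standing assumption that $X$ is reflexive and non-separable, mirroring Kalton's strategy but adapting it from the decoupled norm setting to the coupled numerical-radius setting.

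The first step is to extract from the non-separability a countable biorthogonal-type system $(x_n, x_n^*) \subset \Pi(X)$ with useful asymptotic behaviour. Concretely, pick an uncountable $\varepsilon$-separated subset of $S_X$, use reflexivity together with the Eberlein--Šmulian theorem to find a weakly convergent subsequence, and translate/renormalize to obtain a weakly null sequence $(x_n) \subset S_X$. For each $n$, select $x_n^* \in S_{X^*}$ with $x_n^*(x_n)=1$, so that $(x_n, x_n^*) \in \Pi(X)$; by passing to a further subsequence (using reflexivity of $X^*$), arrange that $x_n^*$ is also weakly null in $X^*$.

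With this sequence in hand, the plan is to define $T$ as a diagonal-type operator, for instance
\[
T(x) := \sum_{n=1}^\infty \alpha_n \, x_n^*(x)\, x_n,
\]
with a strictly increasing sequence $\alpha_n \nearrow c$. Continuity of $T$ follows from summability plus boundedness of $(x_n^*)$, and direct computation shows $\nu(T) = c$ via the pairs $(x_n, x_n^*)$. To rule out attainment, for a fixed candidate $(x, x^*) \in \Pi(X)$ one estimates
\[
|x^*(T(x))| \leq \sum_n \alpha_n |x_n^*(x)|\,|x^*(x_n)|,
\]
and then uses the weak nullity of $(x_n)$ and $(x_n^*)$, together with the constraint $x^*(x) = 1$, to force this quantity to be strictly below $c$, since only finitely many indices can contribute significantly and the sup $c$ is attained only ``at infinity''.

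The principal obstacle, compared to Kalton's original construction for reflexive $\mathcal{L}(X)$, is precisely the coupling condition $x^*(x) = 1$ inherent in $\Pi(X)$: $x$ and $x^*$ cannot be varied independently, which makes the non-attainment calculation subtler than in the norm-attaining case. Handling this will likely require a delicate choice of $(x_n, x_n^*)$ (with some control on the quantities $|x_n^*(x_m)|$ for $n \neq m$ and perhaps a spreading/asymptotic-orthogonality condition) and may force the operator to be more elaborate than a pure diagonal one; this step is where I expect the bulk of the technical work to lie.
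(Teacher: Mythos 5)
There is a genuine gap, and it sits exactly where you defer ``the bulk of the technical work.'' First, the operator $T(x) = \sum_{n} \alpha_n x_n^*(x) x_n$ with $\alpha_n \nearrow c > 0$ is not well defined for a general weakly null sequence $(x_n) \subseteq S_X$ and Hahn--Banach functionals $x_n^*$: since the $\alpha_n$ do not tend to $0$, convergence of the series for every $x \in X$ forces the system $(x_n, x_n^*)$ to behave like a Schauder basic sequence whose associated partial-sum operators $x \mapsto \sum_{n \leq N} x_n^*(x) x_n$ are uniformly bounded on all of $X$, and an arbitrary weakly null sequence together with arbitrary norming functionals gives you none of that (``summability plus boundedness of $(x_n^*)$'' is not enough). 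Second, and more structurally, after your opening paragraph the non-separability of $X$ plays no role: a normalized weakly null sequence exists in every infinite-dimensional reflexive space, so if your construction could be completed it would show that \emph{every} infinite-dimensional reflexive Banach space carries an operator that fails to attain its numerical radius. That is strictly stronger than Theorem \ref{theorem:CAP+NRA-fin-dim}, which needs the compact approximation property, and it is not known in general; this is a strong indication that the deferred non-attainment estimate cannot be pushed through. Note also that your argument never uses the hypothesis $\mathcal{L}(X) = \NRA(X)$ beyond extracting reflexivity, whereas the statement needs it in an essential way.

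The paper's proof goes in a completely different, ``soft'' direction. From $\mathcal{L}(X) = \NRA(X)$ it deduces, via Theorem \ref{thm:isometry}, Proposition \ref{prop:reflexive} and James' theorem, that $(\mathcal{L}(X)/\mathcal{Z}(X), \nu)$ is a reflexive space with predual $X \widehat{\otimes}_{\Pi} X^*$. Non-separability enters only through Chadwick's theorem, which provides non-trivial projections $Q_n$ with $\sum_n Q_n = \Id_X$ in the weak operator topology; this convergence passes to the weak topology of the reflexive quotient, Mazur's theorem upgrades it to $\nu$-convergence of suitable finite linear combinations $S_n$ of $Q_1, \dots, Q_n$, and evaluating $S_n - \Id_X$ at a norm-one vector in $Q_{n+1}(X) \subseteq \ker S_n$ yields $1 \leq \nu(S_n - \Id_X) < 1$, a contradiction. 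If you want to salvage a constructive approach, you would at minimum need to replace your weakly null sequence by the structure Chadwick's decomposition provides and find a way to exploit the reflexivity of the quotient; as written, the proposal does not constitute a proof.
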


\begin{proof} As before, the reflexivity of $X$ follows from \cite[Theorem 1]{AR}. 
Thus, it is enough to show that $X$ is separable. Assume that $X$ is non-separable. By a result due to Chadwick \cite{C}, there exists non-trivial projections $Q_n$ on $X$ such that $\sum_{n=1}^{\infty} Q_n = \Id_X$ in the weak operator topology. Note that $\sup_{N \in \mathbb{N}} \left\| \sum_{n=1}^N Q_n \right\| < \infty$ by the Principle of Uniform Boundedness. Since $\mathcal{L}(X) = \NRA(X) $, we have from Theorem \ref{thm:isometry} and Proposition \ref{prop:reflexive} that $(\mathcal{L}(X) /\mathcal{Z}(X), \nu)$ is reflexive and its dual is isometrically isomorphic to $X \widehat{\otimes}_{\Pi} X^*$. Observe that $\langle u, \sum_{n=1}^k Q_n +\mathcal{Z}(X)  \rangle \rightarrow \langle u, \sum_{n=1}^\infty Q_n + \mathcal{Z}(X) \rangle$ for each $u \in X \widehat{\otimes}_{\Pi} X^*$ as $k \rightarrow \infty$, that is, $\sum_{n=1}^{\infty} Q_n + \mathcal{Z}(X) = \Id_X + \mathcal{Z}(X)$ in the weak topology. It follows that there exists $S_n = \sum_{i=1}^n \lambda_i^{(n)} Q_i$ such that $S_n + \mathcal{Z}(X) \stackrel{\nu}{\longrightarrow} \Id_X + \mathcal{Z}(X)$ in $(\mathcal{L}(X) /\mathcal{Z}(X), \nu)$. Let us find a large $n \in \N$ so that 
	\begin{equation*}
	\nu (S_n - \Id_X) = \nu ( (S_n + \mathcal{Z}(X)) - (\Id_X + \mathcal{Z}(X)) )  < 1. 
	\end{equation*} 
Note that $Q_{n+1} \neq 0$ and take $x_0 \in X$ such that $Q_{n+1} (x_0) \in S_X$. Put $x_1 = Q_{n+1} (x_0)$ and let $x_1^* \in S_X^*$ so that $x_1^* (x_1) = 1$. Then we have that 
\[
|x_1^* ((S_n - \Id_X) (Q_{n+1} (x_0))) | = |x_1^* ((S_n - \Id_X) (x_1))| \leq \nu (S_n - \Id_X) < 1.
\]
On the other hand, 
\[
|x_1^* ((S_n - \Id_X) (Q_{n+1} (x_0))) | = |x_1^* ( S_n ( Q_{n+1} (x_0) ) )  - x_1^* (x_1) | = |x_1^* (x_1)| = 1
\]
since $Q_{n+1} (X) \subseteq \ker S_n$. This is a contradiction, so $X$ must be separable.
\end{proof}

As already noted, if $\mathcal{L}(X) =\NRA(X)$, then $X$ must be reflexive. However, the converse is false. Indeed, it is observed also in \cite{AR} that given a reflexive Banach space $X$ with a Schauder basis, there exists an isomorphic copy $\tilde{X}$ of $X$ such that not every operator on $\tilde{X}$ is numerical radius attaining. The following result, in particular, shows that if $X$ is an infinite dimensional Banach space with Schauder basis, then there exists $T \in \mathcal{L}(X)$ which does not attain its numerical radius without considering renormings of $X$. 

\begin{theorem} \label{theorem:CAP+NRA-fin-dim} Let $X$ be a Banach space with the compact approximation property. If $\mathcal{L}(X) = \NRA(X) $, then $X$ must be finite dimensional. 
\end{theorem}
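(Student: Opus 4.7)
The plan is to combine Proposition~\ref{prop:reflexive} (together with Theorem~\ref{thm:isometry}, which identifies $(\mathcal{L}(X)/\mathcal{Z}(X),\nu)$ as a reflexive Banach space with predual $X\widehat{\otimes}_\Pi X^*$) with the CAP to produce a single compact operator $K\in\mathcal{K}(X)$ satisfying $\nu(\Id_X - K) < 1/2$. A weakly null sequence in $S_X$ will then deliver the contradiction when $X$ is infinite dimensional, because compactness of $K$ will force $Kx_n\to 0$ in norm while the numerical radius estimate will force the scalars $x_n^*(Kx_n)$ to stay close to $1$.

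To build $K$, I would first fix a net $(K_\alpha)\subset\mathcal{K}(X)$ provided by the CAP, so that $K_\alpha x\to x$ uniformly on compact subsets of $X$; in particular $K_\alpha x\to x$ for every $x\in X$, and the uniform boundedness principle then gives $\sup_\alpha\|K_\alpha\|<\infty$. I would then verify that $K_\alpha + \mathcal{Z}(X)\to \Id_X+\mathcal{Z}(X)$ weakly in the reflexive space $(\mathcal{L}(X)/\mathcal{Z}(X),\nu)$: testing against a generic predual element $u=\sum_i \lambda_i x_i\otimes x_i^*$ with $\sum_i|\lambda_i|<\infty$ and $(x_i,x_i^*)\in\Pi(X)$, a standard truncation argument using the uniform bound on the net reduces matters to the finite-rank case $\sum_i\lambda_i x_i^*(K_\alpha x_i)\to\sum_i\lambda_i$, which is immediate from the pointwise convergence. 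Mazur's theorem and the reflexivity from Proposition~\ref{prop:reflexive} then yield a convex combination $K\in\mathcal{K}(X)$ of the $K_\alpha$'s with $\nu(\Id_X-K)<1/2$.

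Finally, assume for contradiction that $X$ is infinite dimensional. By the preceding proposition $X$ is separable and reflexive, so one can find a sequence $(x_n)\subset S_X$ with $x_n\to 0$ weakly; for each $n$ pick a norming functional $x_n^*\in S_{X^*}$ with $x_n^*(x_n)=1$. Then $(x_n,x_n^*)\in\Pi(X)$ gives $|x_n^*(Kx_n)-1|\leq\nu(K-\Id_X)<1/2$ for every $n$, while the compactness of $K$ combined with the weak nullity of $(x_n)$ in the reflexive space $X$ forces $\|Kx_n\|\to 0$, so $|x_n^*(Kx_n)|\to 0$; these two statements are incompatible. The only delicate step is the SOT-to-weak passage in the quotient, which must be handled via the $\varepsilon$-tail argument above using the uniform bound on the CAP net; everything else is a clean application of Mazur's theorem together with the standard fact that compact operators on a reflexive space send weakly null sequences to norm null sequences.
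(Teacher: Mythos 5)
Your argument is a genuine and essentially correct reorganization of the paper's proof, built from the same ingredients: the duality $(\mathcal{L}(X)/\mathcal{Z}(X),\nu)\cong(X\widehat{\otimes}_{\Pi}X^*)^*$ of Theorem~\ref{thm:isometry}, the reflexivity supplied by Proposition~\ref{prop:reflexive}, an approximating net of compact operators, and a normalized weakly null sequence paired with norming functionals. The difference is where the limit is taken. The paper works on the predual side: it extracts a weak limit $u$ of $(x_n\otimes x_n^*)$, shows that $\langle u,T+\mathcal{Z}(X)\rangle=0$ for every compact $T$, and then, after approximating $u$ by a finite tensor and letting the compact net act on it, concludes that $|\langle u,\Id_X+\mathcal{Z}(X)\rangle|$ is both smaller than $1$ and equal to $1$. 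You work on the operator side: you upgrade pointwise convergence of the compact net to weak convergence in the reflexive quotient and use Mazur's theorem to produce one concrete compact operator $K$ with $\nu(\Id_X-K)<1/2$, after which the weakly null sequence gives the contradiction directly. This variant is legitimate and arguably more transparent, since compactness is exploited through a single operator rather than through a weak limit of tensors.

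There is, however, one step that fails as written: ``$K_\alpha x\to x$ for every $x\in X$, and the uniform boundedness principle then gives $\sup_\alpha\|K_\alpha\|<\infty$.'' The compact approximation property only provides a \emph{net} of compact operators (indexed by pairs consisting of a compact set and a tolerance), and a pointwise convergent net, unlike a sequence, need not be pointwise bounded: for each fixed $x$ only a tail of the net, depending on $x$, is bounded at $x$, so the hypothesis $\sup_\alpha\|K_\alpha x\|<\infty$ of the uniform boundedness principle is simply not available. This cannot be repaired by a soft argument, because the approximation property is known not to imply its bounded version in general. The uniform bound you need is nevertheless true here, for exactly the reason the paper invokes: a reflexive space with the compact approximation property has the \emph{metric} compact approximation property \cite{CJ}, so the net $(K_\alpha)$ may be chosen with $\|K_\alpha\|\leq 1$ from the outset. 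With that substitution, your truncation argument for the passage to weak convergence in $(\mathcal{L}(X)/\mathcal{Z}(X),\nu)$, the application of Mazur's theorem, and the final contradiction all go through.
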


\begin{proof} Assume that $X$ is infinite dimensional. As $X$ is reflexive, applying the Josefson-Nissenzweig theorem (see \cite[\S~XII]{Diestel_seq_series}), we take a sequence $(x_n)_{n=1}^{\infty} \subseteq S_X$ converging weakly to $0$. For each $n \in \N$, take $x_n^* \in S_{X^*}$ to be such that $x_n^*(x_n) = 1$. 
Note from Theorem \ref{thm:isometry} and Proposition \ref{prop:reflexive} that $X \widehat{\otimes}_{\Pi} X^*$ is reflexive and its dual is $(\mathcal{L}(X) /\mathcal{Z}(X), \nu)$. Passing to a subsequence if it is necessary, we can assume that $(x_n \otimes x_n^*)$ converges weakly to some $u \in X \widehat{\otimes}_{\Pi} X^*$. Observe that for any compact operator $T$ on $X$, 
	\begin{equation}\label{eq:cpt}
	\langle u, T + \mathcal{Z}(X) \rangle = \lim_n \big\langle x_n \otimes x_n^*, T + \mathcal{Z}(X) \big\rangle = \lim_n x_n^*(T(x_n)) = 0.
	\end{equation}

Now, let $0 < \eps < 1/2$ and pick $u_0 = \sum_{j=1}^N \lambda_j v_j \otimes v_j^* \in X \widehat{\otimes}_{\Pi} X^*$ with $(\lambda_j) \in \mathbb{K}$ and $(v_j,v_j^*) \in \Pi(X)$ for every $j=1,\ldots, N$, to be such that $\|u - u_0\|_{\Pi} < \eps$. Notice from \eqref{eq:cpt} that 
\begin{equation} \label{eq3}
| \langle u_0, T + \mathcal{Z}(X) \rangle| \leq | \langle u, T + \mathcal{Z}(X) \rangle| + \eps = \eps, \ \forall \ T \in \mathcal{K}(X) \text{ with } \|T\| \leq 1.
\end{equation}
Since $X$ is a reflexive space with the compact approximation property, $X$ has in fact the metric compact approximation property (see, for instance, \cite[Proposition 1 and Remarks 1]{CJ}). Thus, there exists a net $(T_{\alpha})$ of norm-one compact operators such that $(T_{\alpha}) \rightarrow \Id_X$ in the compact open topology. It follows, therefore, that
\begin{eqnarray*}
| \langle u_0, \Id_X + \mathcal{Z}(X) \rangle &=& \Big| \sum_{j=1}^N \lambda_j v_j^*(\Id_X(v_j)) \Big| \\
&=& \lim_{\alpha} \Big| \sum_{j=1}^N \lambda_j v_j^*(T_{\alpha} (v_j)) \Big| =  \lim_{\alpha} \left| \langle u_0, T_{\alpha} + \mathcal{Z}(X) \rangle \right| \stackrel{(\ref{eq3})}{\leq} \eps.	
\end{eqnarray*} 
This implies that 
\begin{equation*}
| \langle u, \Id_X + \mathcal{Z}(X) \rangle | \leq | \langle u_0, \Id_X + \mathcal{Z}(X) \rangle| + \eps \leq 2 \eps < 1. 
\end{equation*}
On the other hand,
\begin{equation*}
\langle u, \Id_X + \mathcal{Z}(X) \rangle = \lim_n \langle x_n \otimes x_n^*, \Id_X + \mathcal{Z}(X) \rangle = \lim_n x_n^*(x_n) = 1,
\end{equation*}
which is a contradiction. Therefore, $X$ must be finite dimensional. 	
\end{proof} 

It is natural to ask if every \emph{compact operator} on a Banach space $X$ (with the compact approximation property) attains its numerical radius, then $X$ must be finite dimensional. However, it is known \cite{Acosta1991, dGS} that every compact operator on $\ell_p$ with $1<p<\infty$ attains its numerical radius. Therefore, there is no compact operator version of Theorem \ref{theorem:CAP+NRA-fin-dim}. 

\section{Numerical radius attaining homogeneous polynomials}\label{sec:2}

Let $X$ be a Banach space and $N \in \mathbb{N} \cup \{0\}$ be given.
It is known that if every (finite-type) $N$-homogeneous polynomial on $X$ attains its numerical radius, then $X$ must be a reflexive space. 
More precisely, what is proved in \cite{AGG2003} is the following. For $x^*, x_1^*, \ldots, x_N^* \in \mathbb{N}$ and $x_0 \in X$, let the notation $P_{x_1^*, \ldots, x_N^*, x^*, x_0}$ stand for the $(N+1)$-homogeneous polynomial on $X$ defined by 
\[
P_{x_1^*, \ldots, x_N^*, x^*; x_0} (x) = x_1^* (x) \cdots x_N^* (x) x^* (x) x_0.  
\]
\begin{theorem}[\mbox{\cite[Theorem 4]{AGG2003}}]\label{thm:AGG2003}
Let $X$ be a Banach space and $N \in \mathbb{N}\cup \{0\}$ be given. If there exist $x_0 \in X \setminus \{0\}$ and $x_1^*, \ldots, x_{N}^* \in X^* \setminus \{0\}$ such that $P_{x_1^*, \ldots, x_N^*, x^*, x_0}$ attains its numerical radius for every $x^* \in X^*$, then $X$ must be a reflexive space. 
\end{theorem}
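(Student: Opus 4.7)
The plan is to reformulate the numerical-radius hypothesis as an ``attain-the-supremum'' condition for linear functionals on a bounded subset of $X$, invoke James' theorem in its form for bounded sets, and then promote the resulting weak compactness to reflexivity of $X$.

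First, fix an arbitrary $x^* \in X^*$ and observe that for every $(y, y^*) \in \Pi(X)$,
\[
y^*\bigl(P_{x_1^*, \ldots, x_N^*, x^*, x_0}(y)\bigr) \;=\; \bigl(x_1^*(y) \cdots x_N^*(y)\, y^*(x_0)\bigr)\cdot x^*(y).
\]
Introducing the norm-bounded set
\[
V := \bigl\{ x_1^*(y) \cdots x_N^*(y)\, y^*(x_0) \cdot y \;:\; (y, y^*) \in \Pi(X)\bigr\} \;\subset\; X,
\]
this identity rewrites as
\[
\nu\bigl(P_{x_1^*, \ldots, x_N^*, x^*, x_0}\bigr) \;=\; \sup_{v \in V} |x^*(v)|,
\]
and $P_{x_1^*, \ldots, x_N^*, x^*, x_0}$ attains its numerical radius if and only if the modulus $|x^*(\cdot)|$ attains its supremum on $V$.

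The hypothesis thus says that every $x^* \in X^*$ attains its modulus-supremum on the bounded set $V \subset X$. This is precisely the setting of James' theorem for bounded sets (due classically to James for weakly closed sets, and extended to the general complex case by Pfitzner), which forces $V$ to be weakly relatively compact in $X$. Consequently $K := \overline{\aconv(V)}$ is a weakly compact absolutely convex subset of $X$.

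To deduce reflexivity of $X$, I would show that $K$ absorbs a positive multiple of $B_X$; equivalently, that the seminorm $\phi(x^*) := \sup_{v \in V} |x^*(v)|$ is equivalent to $\|\cdot\|_{X^*}$. The plan is to exhibit, for $y$ in a norming subset of $S_X$, some $y^* \in S_{X^*}$ with $(y, y^*) \in \Pi(X)$ for which $|x_1^*(y) \cdots x_N^*(y)\, y^*(x_0)|$ is bounded below by a uniform positive constant; then $V$ contains (after scaling) a norming subset of $S_X$, yielding $c B_X \subset K$ for some $c > 0$ and hence weak compactness of $B_X$. The main obstacle is precisely this last step, since the naive lower bound degenerates as $y$ approaches $\bigcup_i \ker x_i^*$ or any direction for which every norming functional of $y$ annihilates $x_0$. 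To circumvent these degeneracies I would exploit the norm-density of $S_X \setminus \bigcup_i \ker x_i^*$ (valid because the $x_i^*$ are non-zero) together with a perturbation $y \mapsto (y + t x_0)/\|y + t x_0\|$ as $t \to 0^+$, and invoke the Bishop--Phelps--Bollob\'as property to synchronise the choice of norming functional with the perturbation, thereby producing the required uniform lower bound.
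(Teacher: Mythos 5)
First, a point of reference: the paper does not actually prove Theorem~\ref{thm:AGG2003} (it is quoted from \cite{AGG2003}); the proof to compare against is that of the strengthening which immediately follows it, where the hypothesis is weakened to non-empty weak-star interior of the attaining set. Your opening reduction is exactly the one used there: $P_{x_1^*,\ldots,x_N^*,x^*;x_0}$ attains its numerical radius if and only if $|x^*|$ attains its supremum on the bounded set $V$ (the paper's $B$), and the whole problem becomes showing that $\caconv(V)$ absorbs the unit ball, after which James' theorem for bounded weakly closed sets together with Krein--\v{S}mulian gives reflexivity. Up to that point your argument is sound.

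The gap is in the absorption step, which you correctly flag as the main obstacle but whose proposed resolution does not work. You want, for each $y$ in a norming subset of $S_X$, a norming functional $y^*$ with $|x_1^*(y)\cdots x_N^*(y)\,y^*(x_0)|$ bounded below by a uniform constant. Two things go wrong. First, the product $\prod_i x_i^*(y)$ vanishes on $\bigcup_i\ker x_i^*$, so it cannot be uniformly bounded below on any set obtained by small perturbations of a norming set; density of $S_X\setminus\bigcup_i\ker x_i^*$ gives nonvanishing, not a uniform constant. Second, and more seriously, the perturbation $y\mapsto(y+tx_0)/\|y+tx_0\|$ does not force the norming functionals of the perturbed point to see $x_0$: in $X=\ell_\infty^2$ with $x_0=e_2$ and $y=e_1$, the point $e_1+te_2$ with $0<t<1$ still has norm one and is normed \emph{only} by $e_1^*$, which annihilates $x_0$. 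Bishop--Phelps--Bollob\'as cannot repair this, because to apply it you would already need a pair $(x,x^*)$ with $x^*(x)\approx 1$ \emph{and} $|x^*(x_0)|$ bounded below, which is precisely what is missing. The paper's proof avoids aiming at a norming set altogether: it exhibits a single small off-center ball $su+rB_X\subseteq\overline{\mathbb{D}}\,B$, where $u$ is a perturbation of $x_0$ chosen with $\prod_i x_i^*(u)\neq0$ and $r\ll s$. Every $y$ in that ball is uniformly close to the ray through $x_0$, so \emph{every} norming functional $y^*$ of $y$ automatically satisfies $|y^*(x_0)|\geq\|x_0\|-2s^{-1}r-\eps>0$, while $|\prod_i x_i^*(y)|$ is bounded below by continuity, and one reads off $y\in\lambda B$ with $|\lambda|<1$. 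Since $\sup_{su+rB_X}|x^*|\geq r\|x^*\|$ for every $x^*$, the off-center ball already yields the estimate $\sup_V|x^*|\geq r\|x^*\|$ without the points of $V$ forming (a multiple of) a norming set. To complete your argument you should replace your final step by this localization near $x_0$, or an equivalent one.
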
 
Nevertheless, there is a reflexive Banach space on which not every $N$-homogeneous polynomial is numerical radius attaining \cite[Example 1]{AGG2003}. 
We start this section by generalizing the above Theorem \ref{thm:AGG2003} by using a refined version of James' theorem \cite{JiM} as it is done for the case of linear operators in \cite{AGG2004}. 

\begin{theorem}
Let $X$ be a Banach space and $N \in \mathbb{N}$ be given. Suppose that there exist $x_0 \in X \setminus \{0\}$ and $x_1,\ldots, x_{N-1}^* \in X^* \setminus \{0\}$ such that the set
\[
\{ x^* \in X^* : P_{x_1^*,\ldots,x_{N-1}^*, x^*; x_0} \text{ attains its numerical radius.} \} 
\]
has a non-empty weak-star interior. Then $X$ must be reflexive. 
\end{theorem}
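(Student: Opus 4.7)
The plan is to recast numerical-radius attainment of the polynomial as sup-attainment of the linear functional $x^*$ on a fixed subset of $X$, and then apply the refinement of James' theorem from \cite{JiM}. Write $\alpha(x) := x_1^*(x) \cdots x_{N-1}^*(x)$, an $(N-1)$-homogeneous scalar polynomial, so that $P_{x_1^*, \ldots, x_{N-1}^*, x^*; x_0}(x) = \alpha(x)\, x^*(x)\, x_0$. Define
\[
E := \{ \alpha(x)\, y^*(x_0)\, x : (x, y^*) \in \Pi(X) \} \subseteq X
\]
and $C := \caconv(E)$, a closed, bounded, absolutely convex subset of $X$. The key identity, for each $x^* \in X^*$, is
\[
\nu\bigl(P_{x_1^*, \ldots, x_{N-1}^*, x^*; x_0}\bigr) = \sup_{(x,y^*)\in \Pi(X)} |\alpha(x)\, x^*(x)\, y^*(x_0)| = \sup_{z \in E} |x^*(z)| = \sup_{z \in C} |x^*(z)|,
\]
and the polynomial attains its numerical radius precisely when $|x^*(\cdot)|$ attains its supremum at a point of $E$, and hence of $C$.

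Under the hypothesis, the set of $x^* \in X^*$ attaining $\sup_{z \in C} |x^*(z)|$ thus has non-empty weak-$*$ interior in $X^*$. Applying the refinement of James' theorem of \cite{JiM} to the closed bounded absolutely convex set $C$, we obtain that $C$ is weakly compact in $X$.

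The remaining step is to deduce reflexivity of $X$ from the weak compactness of $C$. I would proceed by contradiction: assuming $X$ is not reflexive, the Eberlein-\v{S}mulian theorem furnishes a sequence $(x_n) \subseteq S_X$ admitting no weakly convergent subsequence. Since $\alpha$ is a nonzero continuous polynomial (as each $x_i^* \neq 0$) and $x_0 \neq 0$, norm-small perturbations of the $x_n$ in $S_X$, together with a careful Hahn-Banach choice of norming functionals $y_n^* \in S_{X^*}$ with $y_n^*(x_n) = 1$, allow us to keep $|\alpha(x_n)\, y_n^*(x_0)|$ uniformly bounded below by some $c > 0$ while preserving the absence of weak cluster points of $(x_n)$. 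The elements $z_n := \alpha(x_n)\, y_n^*(x_0)\, x_n \in E$ then form a bounded sequence in $C$: passing to a subsequence we may assume $\alpha(x_n)\, y_n^*(x_0) \to \mu \in \K$ with $|\mu| \geq c$, and any weakly convergent subsequence of $(z_n)$ would, upon dividing by the scalar factor, yield a weakly convergent subsequence of $(x_n)$. This contradicts the weak compactness of $C$.

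The main obstacle will be this last perturbation step, which parallels the linear-operator treatment in \cite{AGG2004} but additionally requires control of the multiplicative factor $\alpha$: one must arrange that both $\alpha(x_n)$ and $y_n^*(x_0)$ stay uniformly bounded away from zero without destroying the non-convergence of $(x_n)$. The density of the non-vanishing locus of $\alpha$ in $X$ handles the first, while a genericity argument on the Hahn-Banach extensions $y_n^*$ handles the second.
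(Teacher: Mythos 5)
Your opening reduction coincides with the paper's: with $B=E=\{x_1^*(z)\cdots x_{N-1}^*(z)\,z^*(x_0)\,z:(z,z^*)\in\Pi(X)\}$ one has $\nu(P_{x_1^*,\ldots,x_{N-1}^*,x^*;x_0})=\sup_{z\in E}|x^*(z)|$, with attainment of the numerical radius corresponding to attainment of this supremum, so the hypothesis says the functionals attaining their supremum on $E$ form a set with non-empty weak-star interior. From there, however, your route has two genuine gaps. First, you apply ``the refinement of James' theorem of \cite{JiM}'' to the closed bounded absolutely convex set $C=\caconv(E)$ to conclude $C$ is weakly compact. The result actually available (\cite[Proposition 3.2]{JiM}, as used here and in \cite{AGG2004}) concerns the set of \emph{norm-attaining} functionals of an equivalent norm, i.e.\ it applies to a bounded absolutely convex set with \emph{non-empty interior}; your $C$ has no a priori interior, and you supply no substitute statement for general closed bounded convex sets. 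This is not cosmetic: the entire technical content of the paper's proof is the verification that $\overline{\mathbb{D}}B$ contains a ball $su+rB_X$ (with $u$ a small perturbation of $x_0$ on which $x_1^*\cdots x_{N-1}^*$ does not vanish, obtained from the identity principle), which makes $\caconv(B)$ the unit ball of an equivalent norm; reflexivity then follows at once from \cite{JiM} and your third step becomes unnecessary. Note also that weak compactness of $C$ alone cannot suffice: a norm-compact set with dense span exists in $c_0$, so some quantitative ``largeness'' of $C$ must be proved in any case.

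Second, even granting weak compactness of $C$, your concluding perturbation argument does not go through as described. You need, for a sequence $(x_n)\subseteq S_X$ with no weakly convergent subsequence, pairs $(x_n',y_n^*)\in\Pi(X)$ with $x_n'$ close to $x_n$ and $|y_n^*(x_0)|$ bounded below. But there are points $x\in S_X$ at which \emph{every} norming functional annihilates $x_0$, and this persists under small perturbations: in $(\R^2,\|\cdot\|_\infty)$ with $x_0=(0,1)$, the unique norming functional at any $(1,t)$, $|t|<1$, is $(1,0)$. (The example is finite dimensional, but the obstruction to your step is local and has nothing to do with reflexivity.) No genericity of Hahn--Banach extensions rescues this; one must move $x_n$ a definite, non-small distance toward $x_0$, and then simultaneously controlling $|y_n^*(x_0)|$, a uniform lower bound for $|\alpha(x_n')|$ (pointwise non-vanishing of $\alpha$ on a dense set is not a uniform bound), and the preservation of non-weak-convergence amounts to redoing, around an arbitrary $x_n$, the estimate the paper carries out once around $x_0$ itself. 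The honest fix is to abandon the sequence argument and prove, as the paper does, that $\overline{\mathbb{D}}B$ contains an open ball.
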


\begin{proof}
Write the set 
\[
B = \{ x_1^* (z) \cdots x_{N-1}^* (z) z^* (x_0) z : (z, z^*) \in \Pi (X) \} \subseteq X. 
\]
Then $P_{x_1^*,\ldots,x_{N-1}^*, x^*, x_0}$ attains its numerical radius if and only if $|x^*|$ attains its supremum on the set $B$. This implies that the following set 
\[
\{ x^* \in X^* : |x^*| \text{ attains its supremum on } B \} 
\]
has a non-empty weak-star interior. By the same reasoning as in the proof of \cite[Theorem 3.1]{AGG2004}, it is enough to show that $\overline{\mathbb{D}} B:= \{ \lambda b : \lambda \in \mathbb{K}, |\lambda| \leq 1, b \in B \}$ contains an open ball which, in turn, implies that there is an equivalent norm on $X$ which makes the set of norm attaining functionals contain a weak-star open set (see \cite[Proposition 3.2]{JiM}). 

Put $Q := x_1^* \cdots x_{N-1}^*$. If $Q(x_0) \neq 0$, then take any $\delta >0$ such that $|Q(x_0)|>\delta$. If not, then observe that by applying the identity principle \cite[Proposition 5.7]{Mujicabook} for any $\eps >0$ there exists $u \in X$ such that $\| u - x_0\| < \eps$ and $|Q(u)| > \delta$ for some $\delta >0$. 
Since the first case can be deduced from the second case, we only consider the second case. 

Let $0< \eps < \|x_0\|$, and find $u \in X$ and $\delta>0$ such that $\| u- x_0\| < \eps$ and $|Q(u)|>\delta$. Take $0<s<1$ so that 
\begin{equation}\label{eq:s}
\frac{s (\|x_0\| + \eps)^N }{(\|x_0\| -\eps) \delta} < \frac{1}{3}. 
\end{equation} 

Pick an element $y \in s u + r B_X$, where $0<r<1$ is sufficiently small so that 
\begin{equation}\label{eq:r1}
2s^{-1} r < \| x_0 \| - \eps,
\end{equation} 
\begin{align}\label{eq:r2} 
|Q(y)| &= |Q(su + rz)| \quad (\text{for some } z \in B_X ) \\ \nonumber
&\geq s^{N-1} |Q(u)| - |f(r)| > s^{N-1} \delta - |f(r)| > 0, 
\end{align} 
where $f(r) = Q(su + rz) - s^{N-1} Q(u)$ (so, $|f(t)| \rightarrow 0$ as $t \rightarrow 0$), and 
\begin{equation}\label{eq:r3}
\left| \frac{(s (\|x_0\| + \eps) + r )^N}{(s^{N-1} \delta - |f(r)|)(\|x_0\| - 2 s^{-1}r -\eps)} - \frac{s (\|x_0\| + \eps)^N }{(\|x_0\| -\eps) \delta} \right| < \frac{1}{3}. 
\end{equation} 
Take $y^* \in S_{Y^*}$ so that $y^* (y) = \|y\|$, so $(\frac{y}{\|y\|}, y^*) \in \Pi (X)$. 
Note that 
\begin{align*}
|y^* (x_0)| = |y^* (u+ x_0 - u)| &\geq |y^*(u)| - \|x_0 - u \| \\
&> |y^* (s^{-1} y - s^{-1} r z)| - \eps \\
&\geq s^{-1} \|y\| - s^{-1} r - \eps \geq \| x_0\| - 2 s^{-1} r - \eps > 0.
\end{align*} 
Since 
\[
y^* (x_0) y = \frac{\|y\|^N}{\prod_{i=1}^{N-1} x_i^* (y)} \Big[ \prod_{i=1}^{N-1} x_i^* \Big( \frac{y}{\|y\|}\Big) \Big] y^* ( x_0) \frac{y}{\| y\|} \in \left( \frac{\|y\|^N}{Q(y)} \right)  B 
\]
and $y^* (x_0) \neq 0$, we conclude that 
\[
y \in \left( \frac{\|y\|^N}{Q(y) y^* (x_0)} \right)  B. 
\]
Now, observe from \eqref{eq:s}-\eqref{eq:r3} and the fact $\|y\| \leq s(\|x_0\| +\eps) +r$ that 
\begin{align*}
\left| \frac{\|y\|^N}{Q(y) y^* (x_0)} \right| 
&\leq \frac{(s (\|x_0\| + \eps) + r )^N}{(s^{N-1} \delta - f(r))(\|x_0\| - 2 s^{-1}r -\eps)} < \frac{s (\|x_0\| + \eps)^N }{(\|x_0\| -\eps) \delta} + \frac{1}{3} < \frac{2}{3}.
\end{align*} 
This proves that $su + r B_X$ is contained in $\overline{\mathbb{D}} B$ and completes the proof. 
\end{proof}

Next, we will obtain the polynomial version of Theorem \ref{theorem:CAP+NRA-fin-dim}. In order to do so, we follow a similar approach as in the preceding section. Given a Banach space $X$ and $N \in \mathbb{N}$, consider the closed subspace $\mathcal{Z} (^N X):= \{ P \in \mathcal{P}(^N X) : \nu(P) = 0\}$. Then the quotient space $(\mathcal{P}(^N X) / \mathcal{Z} (^N X), \nu)$ turns to be a normed space endowed with $\nu (P + \mathcal{Z}(^N X)) = \inf \{ \nu (P-Q): Q \in \mathcal{Z}(^N X)\}$. 
Also, consider the following space of tensors: 
\[
(\otimes^N X ) \otimes_{\Pi} X^* := \Big\{ \sum_{i=1}^n \lambda_i (\otimes^N x_i ) \otimes x_i^* : (x_i,x_i^*) \in \Pi (X), \lambda_i \in \mathbb{K}, n \in \mathbb{N} \Big\},  
\]
endowed with the norm 
\[
\|u \| = \inf \Big \{ \sum_{i=1}^n |\lambda_i| : u = \sum_{i=1}^n \lambda_i (\otimes^N x_i ) \otimes x_i^* : (x_i,x_i^*) \in \Pi (X), \lambda_i \in \mathbb{K}, n \in \mathbb{N} \Big\}
\]
and denote by $(\otimes^N X ) \widehat{\otimes}_{\Pi} X^*$ its completion. The following duality result is a version of Theorem \ref{thm:isometry} for homogeneous polynomials.   

\begin{proposition}\label{prop:polyisometry}
Let $X$ be a reflexive Banach space and $N \in \mathbb{N}$. Then $(\mathcal{P}(^N X) / \mathcal{Z} (^N X), \nu)$ is isometrically isomorphic to $((\otimes^N X ) \widehat{\otimes}_{\Pi} X^*)^*$. 
\end{proposition}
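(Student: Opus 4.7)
The plan is to mimic, step-for-step, the two-stage argument of Theorem \ref{thm:isometry}, interposing an auxiliary space of ``polynomial-linear'' forms in place of bilinear forms. Explicitly, I would introduce
\[
\mathcal{B}^N(X \times X^*) := \bigl\{\varphi : X \times X^* \to \mathbb{K} : \varphi \text{ is an } N\text{-homogeneous polynomial in the first slot and linear in the second}\bigr\},
\]
equipped with the seminorm $\|\varphi\|_\Pi := \sup\{|\varphi(x,x^*)| : (x,x^*)\in\Pi(X)\}$, and then factor the desired isomorphism as
\[
(\mathcal{P}(^N X)/\mathcal{Z}(^N X),\nu) \xrightarrow{\ \Phi\ } \bigl(\mathcal{B}^N(X \times X^*),\|\cdot\|_\Pi\bigr)\big/\ker\|\cdot\|_\Pi \xrightarrow{\ \Psi\ } \bigl((\otimes^N X)\widehat{\otimes}_\Pi X^*\bigr)^*.
\]

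First I would define $\Phi(P + \mathcal{Z}(^N X)) := \varphi_P + \ker\|\cdot\|_\Pi$ with $\varphi_P(x,x^*) = x^*(P(x))$, and verify that it is an isometric isomorphism by the same bookkeeping as in the linear case: the equivalence $\varphi_P \in \ker\|\cdot\|_\Pi \Leftrightarrow P \in \mathcal{Z}(^N X)$ and the isometric identity are a direct unwinding of the definition of $\nu(P+\mathcal{Z}(^N X))$, and surjectivity reduces to passing from $\varphi \in \mathcal{B}^N(X\times X^*)$ back to a polynomial $P_\varphi \in \mathcal{P}(^N X)$ via $P_\varphi(x) := \varphi(x,\cdot) \in X^{**} \cong X$ (this is the step where reflexivity of $X$ enters, exactly as in Theorem \ref{thm:isometry}).

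Next I would define $\Psi(\varphi + \ker\|\cdot\|_\Pi)(u) := \sum_{i=1}^n \lambda_i\,\varphi(x_i,x_i^*)$ for $u = \sum_{i=1}^n \lambda_i (\otimes^N x_i)\otimes x_i^*$, and check well-definedness, the inequality $\|\Psi(\varphi+\ker\|\cdot\|_\Pi)\| \leq \|\varphi+\ker\|\cdot\|_\Pi\|$, and the reverse inequality obtained by testing on elementary tensors $(\otimes^N x)\otimes x^*$ with $(x,x^*)\in\Pi(X)$, all completely parallel to Theorem \ref{thm:isometry}. Surjectivity of $\Psi$ is the same algebraic-extension trick: given $L\in((\otimes^N X)\widehat{\otimes}_\Pi X^*)^*$, pick an algebraic complement of $(\otimes^N X)\otimes_\Pi X^*$ inside $(\otimes^N X)\widehat{\otimes}_\pi X^*$, extend $L$ algebraically to $\tilde L$, and set $\dbtilde{L}(x,x^*) := \tilde L\bigl((\otimes^N x)\otimes x^*\bigr)$; since $x \mapsto \otimes^N x$ is an $N$-homogeneous polynomial into $\otimes^N X$, the form $\dbtilde{L}$ lies in $\mathcal{B}^N(X\times X^*)$, and the $\Pi(X)$-bound gives $\|\dbtilde{L}\|_\Pi \leq \|L\|$.

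The main obstacle I anticipate is the reconstruction step in the surjectivity of $\Phi$: verifying that the $X^{**}$-valued map $x \mapsto \varphi(x,\cdot)$ is indeed an element of $\mathcal{P}(^N X)$, i.e., a continuous $N$-homogeneous polynomial with values in $X$, rather than merely an $N$-homogeneous set-theoretic map. I expect this to be handled by combining separate continuity of $\varphi$ with polarization (to recover a symmetric $N$-linear form whose diagonal is $P_\varphi$), reflexivity (so that $P_\varphi$ lands in $X$), and the uniform boundedness principle (to upgrade separate to joint boundedness, hence continuity of $P_\varphi$). Once this is in place, every remaining step is a direct transcription of the linear case.
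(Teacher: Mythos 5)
Your proposal is correct and is essentially the paper's own argument: the paper merely composes your two maps into a single $\Phi(P+\mathcal{Z}(^N X))(u)=\sum_{i}\lambda_i x_i^*(P(x_i))$, proves the isometry by the same pair of estimates (one from arbitrary representations of $u$, one from testing on elementary tensors $(\otimes^N x)\otimes x^*$ with $(x,x^*)\in\Pi(X)$), and obtains surjectivity by the same algebraic-extension device, using reflexivity to view $\dbtilde{L}(x)=\tilde{L}((\otimes^N x)\otimes\,\cdot\,)$ as an element of $X^{**}=X$. The continuity of the reconstructed polynomial, which you rightly single out as the delicate point, is asserted without further comment in the paper, so your sketch is at least as careful there.
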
 

\begin{proof}
Even though the proof is very similar to that of Theorem \ref{thm:isometry}, we sketch the proof for the sake of completeness.  
Let $\Phi : (\mathcal{P}(^N X) / \mathcal{Z} (^N X), \nu) \rightarrow ((\otimes^N X ) \widehat{\otimes}_{\Pi} X^*)^*$ be the map defined as 
\[
\Phi (P + \mathcal{Z}(^N X)) (u) = \sum_{i=1}^n \lambda_i x_i^* (P(x_i))
\]
for $u = \sum_{i=1}^n \lambda_i (\otimes^N x_i) \otimes x_i^*$. Then $\Phi$ is well-defined and it is a linear operator. To see that $\Phi$ is a surjective isometry, observe that 
\begin{align*}
\| \Phi (P + \mathcal{Z}(^N X)) (u)\| = \Big | \sum_{i=1}^n  \lambda_i x_i^* (P(x_i)) \Big| \leq \nu(P) \sum_{i=1}^n |\lambda_i| 
\end{align*} 
for any representation $u = \sum_{i=1}^n \lambda_i (\otimes^N x_i)\otimes x_i^*$. Since $\nu(P) = \nu(P+\mathcal{Z}(^N X))$, we conclude that 
$\| \Phi (P+\mathcal{Z}(^N X))\| \leq \nu (P + \mathcal{Z}(^N X))$. Conversely, note that 
\[
|x^* (P(x))| = | \Phi (P + \mathcal{Z}(^N X) ) ( (\otimes x^N) \otimes x^* ) | \leq \| \Phi (P + \mathcal{Z}(^N X) ) \| 
\]
for every $(x,x^*) \in \Pi (X)$; hence $\nu (P + \mathcal{Z}(^N X )) = \nu (P) \leq  \| \Phi (P + \mathcal{Z}(^N X) ) \|$. This shows that $\Phi$ is an isometry. Finally, we claim that $\Phi$ is a surjective. Let $L \in ((\otimes^N X ) \widehat{\otimes}_{\Pi} X^*)^*$ be given. Let $\tilde{L}$ be an algebraic linear extension of $L \vert_{(\otimes^N X ) {\otimes}_{\Pi} X^*}$ to $(\widehat{\otimes}_{N,s,\pi} X) \widehat{\otimes}_{\pi} X^*$ and define the map $\dbtilde{L}$ from $X$ into $X^{**} = X$ by 
\[
(\dbtilde{L}(x) )(x^*) = \tilde{L} ((\otimes^N x) \otimes x^*) \text{ for every } x \in X \text{ and } x^* \in X^*. 
\]
Then $\dbtilde{L}$ is an $N$-homogeneous polynomial on $X$ satisfying that $\nu (\dbtilde{L}) \leq \| L \|$ and $\Phi (\dbtilde{L} + \mathcal{Z}(^N X)) = L$. This completes the proof. 
\end{proof} 

Arguing in the same way as in Proposition \ref{prop:reflexive}, we obtain the following.  

\begin{proposition}\label{prop:reflexive-poly}
Let $X$ be a Banach space and $N \in \mathbb{N}$. If $\mathcal{P}(^N X) = \NRA(^N X)$, then $(\mathcal{P}(^N X)/\mathcal{Z}(^N X), \nu)$ is reflexive. 
\end{proposition}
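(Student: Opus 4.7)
The plan is to follow the blueprint of Proposition \ref{prop:reflexive}, substituting Proposition \ref{prop:polyisometry} for Theorem \ref{thm:isometry} and using Theorem \ref{thm:AGG2003} (instead of \cite[Theorem 1]{AR}) to secure reflexivity of $X$. Two ingredients are therefore needed: reflexivity of $X$ itself, and the verification that every continuous linear functional on the predual identified in Proposition \ref{prop:polyisometry} attains its norm; James' theorem then closes the argument.

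For the first ingredient, I would invoke Theorem \ref{thm:AGG2003}. The hypothesis $\mathcal{P}(^N X) = \NRA(^N X)$ guarantees in particular that every finite-type polynomial $P_{x_1^*,\ldots,x_{N-1}^*,x^*;x_0}$, which is $N$-homogeneous, attains its numerical radius as $x^*$ ranges over $X^*$, once any nonzero $x_0 \in X$ and any nonzero $x_1^*,\ldots,x_{N-1}^* \in X^*$ are fixed. Applying Theorem \ref{thm:AGG2003} with $N-1$ in place of $N$ (this is the rank-one result of \cite[Theorem 1]{AR} when $N = 1$) yields that $X$ is reflexive.

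With $X$ reflexive, Proposition \ref{prop:polyisometry} identifies $(\mathcal{P}(^N X)/\mathcal{Z}(^N X), \nu)$ isometrically with $((\otimes^N X) \widehat{\otimes}_{\Pi} X^*)^*$. I would then pick an arbitrary $L$ in this dual, pull it back through the isomorphism to a coset $P + \mathcal{Z}(^N X)$ satisfying $\|L\| = \nu(P + \mathcal{Z}(^N X)) = \nu(P)$, and use the hypothesis to select $(x,x^*) \in \Pi(X)$ with $\nu(P) = |x^*(P(x))|$. Since $(\otimes^N x) \otimes x^*$ lies on the unit sphere of $(\otimes^N X) \widehat{\otimes}_{\Pi} X^*$, the identity
\[
|L((\otimes^N x) \otimes x^*)| = |x^*(P(x))| = \nu(P) = \|L\|
\]
witnesses that $L$ attains its norm. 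Since $L$ was arbitrary, James' theorem \cite{James} forces $(\otimes^N X) \widehat{\otimes}_{\Pi} X^*$ to be reflexive, and therefore so is its dual $(\mathcal{P}(^N X)/\mathcal{Z}(^N X), \nu)$.

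I do not anticipate a significant obstacle here: once Proposition \ref{prop:polyisometry} is in hand, the argument is a direct transcription of the linear case. The only substantive point is noticing that the finite-type polynomials $P_{x_1^*,\ldots,x_{N-1}^*,x^*;x_0}$ are $N$-homogeneous and hence automatically lie in $\NRA(^N X)$ under the standing hypothesis, so Theorem \ref{thm:AGG2003} applies without modification.
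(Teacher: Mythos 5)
Your proposal is correct and follows exactly the route the paper intends: the paper gives no separate proof, stating only that one argues as in Proposition \ref{prop:reflexive}, and your write-up is precisely that argument with Proposition \ref{prop:polyisometry} replacing Theorem \ref{thm:isometry} and Theorem \ref{thm:AGG2003} (applied with parameter $N-1$) supplying the reflexivity of $X$. The only cosmetic slip is calling $(\otimes^N x)\otimes x^*$ an element of the unit \emph{sphere}; a priori its $\Pi$-norm is only $\leq 1$, which is all that is needed for $L$ to attain its norm on the closed unit ball and hence for James' theorem to apply.
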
 

In \cite{CGKM2006}, the \emph{polynomial numerical index of order N}, denoted by $n^{(N)} (X)$, of a Banach space $X$ is introduced and investigated. Namely,
\[
n^{(N)} (X) = \{ \nu (P) : P \in \mathcal{P} (^N X),  \|P\| = 1  \}. 
\]
As a consequence, Proposition \ref{prop:reflexive-poly} implies that if $n^{(N)} (X) > 0$ and $\mathcal{P}(^N X) = \NRA(^N X)$, then $\mathcal{P}(^N X)$ is reflexive since $\mathcal{Z} (^N X) = \{ 0\}$. It might be worth mentioning that $n^{(N)}(X) > 0$ for every $N \in \mathbb{N}$ whenever $n(X) >0$ (see \cite[Proposition 2.5]{CGKM2006}). 

Now, we are ready to prove the following. The idea of the proof is similar to that of Theorem \ref{theorem:CAP+NRA-fin-dim}, but using slightly more carefully the Josefson-Nissenzweig theorem. Recall that a polynomial $P \in \mathcal{P}(^N X)$ is said to be \emph{weakly sequentially continuous} if the sequence $(P(x_n))$ is norm-convergent whenever a sequence $(x_n)$ is weakly convergent. Also, a polynomial $P \in \mathcal{P}(^N X)$ is said to be \emph{weakly (uniformly) continuous} if it is weakly uniformly continuous on bounded subsets of $X$. 

\begin{theorem}\label{theorem:CAP+NRA-fin-dim-poly}
Let $X$ be a Banach space with the compact approximation property and $N \in \mathbb{N}$. If $\mathcal{P}(^N X) = \NRA(^N X)$, then $X$ must be finite dimensional. 
\end{theorem}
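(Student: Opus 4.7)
The approach mirrors the proof of Theorem~\ref{theorem:CAP+NRA-fin-dim} transposed to the polynomial tensor setting. Arguing by contradiction, I would suppose $X$ is infinite-dimensional. Proposition~\ref{prop:reflexive-poly} gives that $(\mathcal{P}(^N X)/\mathcal{Z}(^N X),\nu)$ is reflexive, and thus by Proposition~\ref{prop:polyisometry} its predual $E := (\otimes^N X)\widehat{\otimes}_\Pi X^*$ is reflexive too; $X$ itself is reflexive, e.g.\ via Theorem~\ref{thm:AGG2003}.

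The ``slightly more careful'' use of the Josefson-Nissenzweig theorem is to apply it simultaneously in $X$ and $X^*$, producing a weakly null sequence $(x_n)\subseteq S_X$ paired with a weakly null sequence $(x_n^*)\subseteq S_{X^*}$ satisfying $x_n^*(x_n)=1$; this can be arranged by taking a weakly null basic subsequence $(x_n)$ (Bessaga-Pelczynski style extraction), and using the shrinking property of its biorthogonal functionals (a consequence of reflexivity of $[x_n]$) together with a Hahn-Banach extension to obtain $(x_n^*)\subseteq S_{X^*}$ weakly null and biorthogonal. Passing to a subsequence, the bounded sequence $(x_n^{\otimes N}\otimes x_n^*)$ in the reflexive space $E$ converges weakly to some $u\in E$.

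The key observation is that $\langle u, Q+\mathcal{Z}(^N X)\rangle=0$ for every compact $Q\in\mathcal{P}(^N X)$, now relying on the weak nullness of \emph{both} sequences: extracting a subsequence along which $Q(x_n)\to z$ in norm (by compactness), the decomposition $x_n^*(Q(x_n))=x_n^*(z)+x_n^*(Q(x_n)-z)$ forces both summands to tend to zero. Using the metric CAP, one picks a net $(K_\alpha)$ of norm-one compact operators with $K_\alpha\to\Id_X$ in the compact-open topology; then $P\circ K_\alpha$ is compact for every $P\in\mathcal{P}(^N X)$, and $P\circ K_\alpha\to P$ uniformly on compact subsets of $X$. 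The three-line estimate of Theorem~\ref{theorem:CAP+NRA-fin-dim} transports verbatim: for any finite-sum approximation $u_0 = \sum_{j=1}^M \lambda_j(v_j^{\otimes N})\otimes v_j^*$ of $u$ with $\|u-u_0\|_\Pi<\eps$, one arrives at $|\langle u, P+\mathcal{Z}(^N X)\rangle|\leq 2\eps\|P\|$ for every $P\in\mathcal{P}(^N X)$.

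The contradiction then comes from exhibiting a specific polynomial $P_0\in\mathcal{P}(^N X)$ for which $\lim_n x_n^*(P_0(x_n))\neq 0$: the natural candidate is the ``diagonal $N$-th power'' polynomial $P_0(x)=\sum_{i\geq 1}[\tilde x_i^*(x)]^N x_i$ built from the biorthogonal system, which by construction satisfies $P_0(x_n)=x_n$, so that $\langle u, P_0\rangle=1$ would contradict $|\langle u, P_0\rangle|\leq 2\eps\|P_0\|$ for $\eps$ small enough. The main obstacle is precisely this construction for a general reflexive Banach space with CAP: ensuring that the series defines a bounded continuous $N$-homogeneous polynomial on the \emph{whole} of $X$ (not just on the basic subspace $[x_n]$) is the heart of the matter, and is expected to require a careful refinement of the basic subsequence---for instance via a block-basic subsequence with good convexity properties, or via a truncation-plus-limit argument compatible with the approximation scheme afforded by the CAP.
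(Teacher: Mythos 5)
Your skeleton is the right one --- reflexivity of $(\otimes^N X)\widehat{\otimes}_\Pi X^*$ via Propositions \ref{prop:polyisometry} and \ref{prop:reflexive-poly}, a weak cluster point $u$ of the tensors $(\otimes^N x_n)\otimes x_n^*$, a CAP approximation step controlling $\langle u, Q+\mathcal{Z}(^N X)\rangle$, and a witness polynomial --- but there is a genuine gap exactly where you flag one, and it is fatal for the route you chose. By taking \emph{both} $(x_n)$ and $(x_n^*)$ weakly null you force $\langle u, Q+\mathcal{Z}(^N X)\rangle=0$ for \emph{every} $Q$, i.e.\ $u=0$, so the only possible contradiction is a polynomial $P_0$ with $\lim_n x_n^*(P_0(x_n))\neq 0$; but every polynomial of the tractable form $f(\cdot)^{N-1}(\cdot)$ gives $x_n^*(P_0(x_n))=f(x_n)^{N-1}x_n^*(x_n)=f(x_n)^{N-1}\to 0$ precisely because $(x_n)$ is weakly null (for $N\geq 2$; for $N=1$ the identity works and you just recover Theorem \ref{theorem:CAP+NRA-fin-dim}). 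Hence you are driven to the infinite diagonal sum $\sum_i \tilde x_i^*(\cdot)^N x_i$, and the boundedness of such a polynomial on all of $X$ (or even on $[x_n]$, plus the complementation needed to extend it) is simply not available in a general reflexive space with the compact approximation property; this is not a refinement to be supplied later but the obstruction itself. (A secondary unproved claim: that norming functionals $x_n^*$ with $(x_n,x_n^*)\in\Pi(X)$ can be chosen weakly null; Hahn--Banach extensions of the biorthogonal functionals of a shrinking basic sequence need not be weakly null in $X^*$, and renormalizing after subtracting a weak limit destroys the condition $x_n^*(x_n)=1$.)

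The paper's ``slightly more careful'' use of the Josefson--Nissenzweig theorem is exactly the device that removes this obstruction: one arranges $(x_n)\subseteq S_X$ to converge weakly not to $0$ but to some $x_\infty$ with $\|x_\infty\|=1/2$. Then for weakly sequentially continuous $P$ with $\|P\|=1$ one only gets $|\langle u,P+\mathcal{Z}(^N X)\rangle| = \lim_n|x_n^*(P(x_\infty))|\leq 2^{-N}$ (not $0$), and the CAP step propagates this bound to all norm-one $Q$; the witness is then the single explicit norm-one polynomial $P_\infty(x)=x_\infty^*(x)^{N-1}x$, where $x_\infty^*(x_\infty)=1/2$, for which $\langle u,P_\infty+\mathcal{Z}(^N X)\rangle=\lim_n x_\infty^*(x_n)^{N-1}x_n^*(x_n)=2^{-(N-1)}>2^{-N}$. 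No infinite sum, no biorthogonality, and no weak nullity of $(x_n^*)$ are needed. If you shift your sequence in this way, the rest of your argument (including the $P\circ K_\alpha$ variant of the approximation step, which is a legitimate alternative to the paper's citation of weakly uniformly continuous approximants) goes through.
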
 

\begin{proof}
Assume that $X$ is an infinite dimensional Banach space. Note from Theorem \ref{thm:AGG2003} that $X$ is reflexive. Using the Josefson-Nissenzweig theorem, take a sequence $(x_n) \subseteq S_X$ converging weakly to some $x_\infty \in X$ with $\|x_\infty\| = 1/2$. For each $n \in \N$, pick $x_n^* \in S_{X^*}$ to be such that $x_n^*(x_n) = 1$, and choose $x_\infty^* \in S_{X^*}$ such that $x_\infty^* (x_\infty) = 1/2$. 
By Proposition \ref{prop:reflexive-poly}, $(\otimes^N X) \widehat{\otimes}_{\Pi} X^*$ is reflexive and its dual is $(\mathcal{P}(^N X)/\mathcal{Z}(^N X), \nu)$. Thus, we may assume that $(\otimes^N x_n) \otimes x_n^*$ converges weakly to some $u \in (\otimes^N X) \widehat{\otimes}_{\Pi} X^*$. Then for every weakly sequentially continuous polynomial $P$ in $\mathcal{P}(^N X)$ with $\|P\|=1$, we have that 
\begin{equation}\label{eq:u,P}
|\langle u, P + \mathcal{Z}(^N X ) \rangle| = \lim_n | x_n^* (P(x_n)) | = \lim_n |x_n^* ( P(x_\infty)) | \leq \frac{1}{2^N}.  
\end{equation} 
Now, let $Q \in \mathcal{P}(^N X)$ with $\|Q\|=1$ be given. Using the metric compact approximation property of $X$, we can take a net $(P_\alpha)$ of weakly (uniformly) continuous (hence, weakly sequentially continuous) $N$-homogeneous polynomials on $X$ converging to $Q$ in the compact-open topology and satisfying $\|P_\alpha\| \leq 1$ (see \cite[Corollary 7]{Caliskan2004} and \cite[Proposition 2.1]{MV}). Since $u$ can be approximated by elements of the form $\sum_{i=1}^n \lambda_i (\otimes^N x_i) \otimes x_i^*$ with $(x_i, x_i^*) \in \Pi(X)$, we conclude that 
\begin{equation*}
|\langle u, Q + \mathcal{Z}(^N X) \rangle| \leq \limsup_\alpha |\langle u, P_\alpha + \mathcal{Z}(^N X) \rangle| \stackrel{\eqref{eq:u,P}}{\leq} \frac{1}{2^N}. 
\end{equation*}
As the above inequality holds for arbitrary $Q \in \mathcal{P}(^N X)$ with $\|Q \| = 1$, we have that 
\begin{equation}\label{eq:uQ}
\sup \{ | \langle u, Q + \mathcal{Z}(^N X) \rangle | : Q \in \mathcal{P}(^N X), \|Q\| =1 \} \leq \frac{1}{2^N}. 
\end{equation} 
On the other hand, consider $P_\infty \in \mathcal{P}(^N X)$ given by $P_\infty (x) = x_\infty^* (x)^{N-1} x$ for every $x \in X$. It is clear that  $\|P_\infty\| =1$. However, 
\[
\langle u, P_\infty + \mathcal{Z}(^N X) \rangle = \lim_n x_n^* ( P_\infty (x_n)) = \lim_{n} x_\infty^* (x_n)^{N-1} x_n^* (x_n) = x_\infty^* (x_\infty)^{N-1} = \frac{1}{2^{N-1}}.
\]
This contradicts \eqref{eq:uQ}, so we conclude that $X$ must be a finite dimensional space. 
\end{proof} 

In the previous section, we mentioned that every compact operator on $\ell_p$, $1<p<\infty$, attains its  numerical radius. Not surprisingly, this result can be extended to the case of weakly sequentially continuous homogeneous polynomials. It is immediate that a weakly sequentially continuous bounded linear operator on $X$ is nothing but a completely continuous operator on $X$. The proof is similar to the argument in \cite{Acosta1991}, but we present the details for the sake of completeness. 

\begin{proposition}\label{prop:lp-poly}
Given $1<p<\infty$ and $N \in \mathbb{N}$, every weakly sequentially continuous $N$-homogeneous polynomial on $\ell_p$ is numerical radius attaining. 
\end{proposition}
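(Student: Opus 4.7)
\medskip
\noindent\emph{Proof proposal.} The plan is to adapt the strategy of \cite{Acosta1991} for compact operators. Since $\ell_p$ is smooth, the normalized duality map $J_p \colon S_{\ell_p} \to S_{\ell_q}$ (with $1/p + 1/q = 1$) is single-valued and $\Pi(\ell_p) = \{(y, J_p(y)) : y \in S_{\ell_p}\}$; explicitly, $J_p(y)_k = |y_k|^{p-1}\operatorname{sgn}(y_k)$. Writing $j(a) := (|a_k|^{p-1}\operatorname{sgn}(a_k))_k \in \ell_q$ for $a \in \ell_p$, the map $j$ is $(p-1)$-homogeneous and norm-continuous, with $J_p(a) = j(a)/\|a\|^{p-1}$ for $a \neq 0$. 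I would take a maximizing sequence $(x_n) \subseteq S_{\ell_p}$ with $|J_p(x_n)(P(x_n))| \to \nu(P)$, pass to a subsequence (by reflexivity) with $x_n \to x$ weakly in $\ell_p$ and $J_p(x_n) \to x^*$ weakly in $\ell_q$, and set $\alpha := \|x\|$. The weak sequential continuity of $P$ yields $P(x_n) \to P(x)$ in norm; in the case $\alpha = 0$ this forces $P(x) = 0$ and hence $\nu(P) = 0$, so the statement is trivial. For $\alpha > 0$, splitting
\[
J_p(x_n)(P(x_n)) = J_p(x_n)(P(x)) + J_p(x_n)(P(x_n) - P(x))
\]
and combining weak convergence of $J_p(x_n)$ with norm convergence of $P(x_n)$ yields $J_p(x_n)(P(x_n)) \to x^*(P(x))$, whence $|x^*(P(x))| = \nu(P)$.

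The main step is to identify $x^*$ with $j(x)$. Let $\pi_F$ denote the coordinate projection of $\ell_p$ onto $F \subseteq \mathbb{N}$; since $r_n := x_n - x \to 0$ weakly and each $\pi_{[1,N]}$ is finite-rank, a standard diagonal argument produces an increasing sequence $(N_k) \uparrow \infty$ and a subsequence $(x_{n_k})$ with $\|\pi_{[1, N_k]} r_{n_k}\| \to 0$. Set $A_k := \pi_{[1, N_k]} x_{n_k}$ and $B_k := \pi_{(N_k, \infty)} x_{n_k}$; then $A_k = \pi_{[1, N_k]} x + \pi_{[1, N_k]} r_{n_k} \to x$ in norm, while $B_k$ has support in $(N_k, \infty)$ with $N_k \to \infty$, so $B_k \to 0$ weakly. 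Because $A_k$ and $B_k$ have disjoint supports and $\|x_{n_k}\| = 1$, the explicit formula for $J_p$ gives $J_p(x_{n_k}) = j(A_k) + j(B_k)$; by norm continuity of $j$, $j(A_k) \to j(x)$ in norm, while $j(B_k) \to 0$ weakly since its support escapes to infinity with norms bounded. Hence $J_p(x_{n_k}) \to j(x)$ weakly, and by uniqueness of weak subsequential limits, $x^* = j(x)$ (so in particular $\|x^*\| = \alpha^{p-1}$).

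To conclude, set $y := x/\alpha \in S_{\ell_p}$ and $y^* := J_p(y)$; the $(p-1)$-homogeneity of $j$ yields $y^* = j(x)/\alpha^{p-1} = x^*/\alpha^{p-1}$, so $(y, y^*) \in \Pi(\ell_p)$. The $N$-homogeneity of $P$ then gives
\[
|y^*(P(y))| = \frac{|x^*(P(x))|}{\alpha^{p-1}\cdot\alpha^N} = \frac{\nu(P)}{\alpha^{p-1+N}}.
\]
Combined with $|y^*(P(y))| \leq \nu(P)$ (and assuming $\nu(P) > 0$, the other case being trivial), this forces $\alpha^{p-1+N} \geq 1$, hence $\alpha = 1$; therefore $x \in S_{\ell_p}$, $x^* = J_p(x)$, and $(x, J_p(x)) \in \Pi(\ell_p)$ attains $\nu(P)$. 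The main obstacle is the gliding-hump identification $x^* = j(x)$: one must carefully exploit the disjoint-support behavior of the nonlinear map $J_p$ and the fact that the tails of $(x_n - x)$ can be arranged to have supports escaping to infinity, so that the weak limit of $J_p(x_n)$ is governed entirely by the norm limit of the finite-support parts.
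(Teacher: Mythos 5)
Your proposal is correct and follows essentially the same route as the paper: a maximizing sequence, weak limits of $x_n$ and of the associated duality-map functionals, identification of the latter's weak limit with $j(x_\infty)$ via the explicit coordinate formula, norm convergence of $P(x_n)$ from weak sequential continuity, and a homogeneity/normalization argument forcing $\|x_\infty\|=1$. The gliding-hump argument you give for $J_p(x_{n_k})\to j(x)$ weakly is precisely the step the paper's proof asserts with ``one can deduce,'' so you have only made explicit what the paper leaves implicit.
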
 

\begin{proof}
Let $P$ be a weakly sequentially continuous $N$-homogeneous polynomial on $\ell_p$ and take $(x_n, x_n^*) \in \Pi (\ell_p)$ so that $|x_n^* (P(x_n))| \rightarrow \nu(P) \neq 0$. Note that $x_n^* (i) = |x_n (i)|^{p/q} \alpha_n (i)$, where $\alpha_n (i) \in \mathbb{T}$ satisfies that $|x_n (i)| = \alpha_n (i) x_n (i)$ and $q$ is the conjugate of $p$. Passing to a subsequence, we may assume that $(x_n)$ converges weakly to $x_\infty \in B_{\ell_p}$. Consider $x_\infty^* \in \ell_p^* = \ell_q$ given by $x_\infty^* (i) = |x_\infty (i)|^{p/q} \alpha (i)$, where $\alpha(i) \in \mathbb{T}$ satisfies that $|x_\infty (i)| = \alpha(i) x_\infty (i)$. Then one can deduce that $(x_n^*)$ converges weekly to $x_\infty^*$ and $x_\infty^* (x_\infty) = \| x_\infty^*\| \|x_\infty\|$. Moreover, since $P$ is weakly sequentially continuous, $|x_\infty^* (P(x_\infty)) | = \nu(P) \neq 0$. In particular, $x_\infty \neq 0$ and $x_\infty^* \neq 0$. Thus,
\[
\frac{\nu(P)}{\|x_\infty^* \| \|x_\infty\|^N } =  \frac{ |x_\infty^*(P(x_\infty) ) |}{\|x_\infty^*\| \|x_\infty\|^N }  = \left| \frac{x_\infty^*}{\|x_\infty^*\|} \Big ( P \Big ( \frac{x_\infty}{\|x_\infty\|} \Big) \Big )\right|  \leq \nu(P);
\] 
hence $\|x_\infty\| = \|x_\infty^* \| = 1$ and $P$ attains its numerical radius at $(x_\infty, x_\infty^*) \in \Pi (\ell_p)$. 
\end{proof}

One may ask whether a similar result can be obtained for Lipschitz functions on a Banach space $X$ into itself as there is a notion of numerical radius for Lipschitz functions, the so called \emph{Lipschitz numerical radius} (we refer the interested readers to \cite{CJT, KMMW, WHT}). However, it is not possible to consider the case when every Lipschitz function attains its Lipschitz numerical radius. Indeed, it is proved in \cite{CJT} that for any Banach space $X$ the set of Lipschitz numerical radius attaining Lipschitz functions on $X$ is not dense in the whole space of Lipschitz functions on $X$.

\section{$2$-homogeneous polynomials whose Aron-Berner extensions attain their numerical radii}\label{sec:3}

It is shown in \cite{AP1989} that for any Banach space $X$ the set of bounded linear operators on $X$ whose second adjoints attain their numerical radii is dense in $\mathcal{L}(X)$. Moreover, it is mentioned in \cite{AP1989-2} that, actually, it is true that the set of bounded linear operators on $X$ whose first adjoints attain their numerical radii is dense. As a matter of fact, those results are parallel versions of the result of J. Lindenstrauss \cite{Lin} and V. Zizler \cite{Zizler} on norm attaining operators. 
In the context of homogeneous polynomials, it is first observed in \cite{AGM} that the set of {scalar-valued} $2$-homogeneous polynomials whose Aron-Berner extensions attain their norms is dense in the whole space. Afterwards, this result was extended to a vector-valued $2$-homogeneous polynomials in \cite{CLS2010}. 

Recall that the canonical extension of a bilinear mapping is obtained by weak-star density as follows (which also works for a general multilinear mapping) \cite{Arens}. For Banach spaces $X, Y$ and $Z$, and $B \in \mathcal{B}(X\times Y; Z)$, the space of all bounded bilinear mappings from $X \times Y$ into $Z$, the extension $\overline{B} \in \mathcal{B}(X^{**} \times Y^{**}; Z^{**})$ of $B$ is defined by 
\[
\overline{B} (x^{**}, y^{**}) \eqw \lim_\alpha \lim_\beta B(x_\alpha, y_\beta)
\]
where $(x_\alpha) \subseteq X$ and $(y_\beta) \subseteq Y$ are nets converging weak-star to $x^{**} \in X^{**}$ and $y^{**} \in Y^{**}$, respectively. The \emph{Aron-Berner extension} of $P \in \mathcal{P}(^2 X)$ is the polynomial $AB(P) \in \mathcal{P}(^2 X^{**})$ given by $AB(P) (x^{**}) := \overline{B}(x^{**})$ where $B$ is the unique symmetric bilinear mapping from $X \times X$ into $X$ associated to $P$ \cite{AronBerner}. It is known that $\|P \| = \|AB(P)\|$ for every $P \in \mathcal{P}(^2 X)$ \cite{DG}.

Let us denote by $\mathcal{P}_{wu}(^N X)$ the space of weakly (uniformly) continuous $N$-homogeneous polynomials from $X$ into $X$, that is, those elements are {weakly (uniformly) continuous on bounded subsets of $X$}. Notice that the space $\mathcal{P}_{wu} (^1 X)$ coincides with the space of all compact operators on $X$.

In this section, we prove that for any Banach space $X$ the set of $P \in \mathcal{P}_{wu}(^2 X)$ whose Aron-Berner extensions are numerical radius attaining is dense in $\mathcal{P}_{wu}(^2 X)$. In order to prove this, we need the following lemma which is a slight modification of \cite[Lemma 3]{AP1989}. 

\begin{lemma}\label{lem:AcoPaya}
Let $X$ be a Banach space and $P \in \mathcal{P}(^2 X)$. Suppose that $|u^* (P(u))| > \nu(P) - \alpha$ for some $(u,u^*) \in \Pi (X)$ and $\alpha >0$. Let us define 
\[
P' (x) = P(x) + \lambda \delta^2 u^* (x)^2 u + \lambda \delta^2 (u^* (B(u,x)) )^2 u, 
\]
where $\lambda \in \mathbb{T}$ satisfies that $u^* (P(u)) = \lambda |u^* (P(u))|$ and $B$ is the symmetric bilinear mapping corresponding to $P$. If $|y^* ( P' (y))| > \nu (P') - \rho$ for some $(y,y^*) \in \Pi(X)$ and $\rho >0$, then we have 
\[
\delta^2 |u^* (y)|^2 + \delta^2 |u^* (B(u, y))|^2 + \rho \geq -\alpha + \delta^2 + \delta^2 (\nu(P)-\alpha)^2. 
\]
\end{lemma}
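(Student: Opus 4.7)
The strategy is a Bishop--Phelps-type sandwich in the spirit of the proof of the original \cite[Lemma~3]{AP1989}: produce a lower estimate for $\nu(P')$ by evaluating $P'$ at the distinguished pair $(u,u^*)$ (where the perturbation has been designed to amplify $P$), produce an upper estimate for $\nu(P')$ by using the hypothesis that $|y^*(P'(y))|$ is within $\rho$ of $\nu(P')$, and then subtract. The common $\nu(P)$ terms will cancel and leave exactly the required inequality.

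\textbf{The upper estimate.} Expanding directly from the definition of $P'$ gives
\[
y^*(P'(y)) = y^*(P(y)) + \lambda \delta^2 u^*(y)^2 y^*(u) + \lambda \delta^2 (u^*(B(u,y)))^2 y^*(u).
\]
Since $(y,y^*) \in \Pi(X)$ we have $|y^*(P(y))| \leq \nu(P)$ and $|y^*(u)| \leq \|u\| = 1$; together with $|\lambda| = 1$ the triangle inequality yields
\[
|y^*(P'(y))| \leq \nu(P) + \delta^2 |u^*(y)|^2 + \delta^2 |u^*(B(u,y))|^2.
\]
Combining this with the hypothesis $|y^*(P'(y))| > \nu(P') - \rho$ gives
\[
\nu(P') < \nu(P) + \delta^2 |u^*(y)|^2 + \delta^2 |u^*(B(u,y))|^2 + \rho.
\]

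\textbf{The lower estimate and conclusion.} Evaluating at $(u,u^*)$, using $u^*(u) = 1$, $B(u,u) = P(u)$, and $u^*(P(u)) = \lambda |u^*(P(u))|$, the two perturbation terms at $x=u$ line up in phase with $\lambda$, and a short computation produces
\[
|u^*(P'(u))| \geq |u^*(P(u))| + \delta^2 + \delta^2 |u^*(P(u))|^2.
\]
The assumption $|u^*(P(u))| > \nu(P)-\alpha$ and the monotonicity of $t \mapsto t + \delta^2 t^2$ on $[0,\infty)$ then give
\[
\nu(P') \geq |u^*(P'(u))| > (\nu(P)-\alpha) + \delta^2 + \delta^2 (\nu(P)-\alpha)^2.
\]
Subtracting from the upper estimate, the $\nu(P)$ terms cancel and one is left with exactly
\[
\delta^2 |u^*(y)|^2 + \delta^2 |u^*(B(u,y))|^2 + \rho > -\alpha + \delta^2 + \delta^2 (\nu(P)-\alpha)^2,
\]
which is the conclusion. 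The point that requires the most care is the lower estimate: one must track the unimodular phase $\lambda$ precisely (this is exactly what the factor $\lambda$ in the two perturbation terms is designed for) so that the contributions of $\delta^2$ and of $\delta^2(u^*(P(u)))^2$ combine constructively with $u^*(P(u))$ rather than partially cancelling. Once that is handled the remainder is a routine triangle-inequality bookkeeping.
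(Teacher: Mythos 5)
Your proposal is correct and follows the paper's proof essentially verbatim: the paper likewise obtains the lower bound $\nu(P') \geq |u^*(P'(u))| \geq \nu(P)-\alpha+\delta^2+\delta^2(\nu(P)-\alpha)^2$ by evaluating at $(u,u^*)$ and using the phase alignment forced by $\lambda$, the upper bound $\nu(P') < \nu(P)+\delta^2|u^*(y)|^2+\delta^2|u^*(B(u,y))|^2+\rho$ from the hypothesis at $(y,y^*)$ together with $|y^*(u)|\leq 1$, and then subtracts. No further comment is needed.
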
 

\begin{proof}
Note first that 
\begin{align*}
\nu(P') \geq | u^* (P' (u))| &= | u^* (P(x)) + \lambda \delta^2 + \lambda \delta^2 u^* (P(u))^2 | \\
&= |u^* (P(u))| + \delta^2 + \delta^2 |u^* (P(u))|^2 \stackrel{\text{(I)}}{\geq} \nu(P) -\alpha + \delta^2 + \delta^2 (\nu(P) - \alpha)^2. 
\end{align*} 
Second, observe that 
\begin{align*}
\nu(P') &< |y^* (P'(y))| + \rho \\
&\stackrel{\text{(II)}}{\leq} \nu (P) + \delta^2 |u^* (y)|^2 + \delta^2 |u^* (B(u,y))|^2 + \rho. 
\end{align*} 
Combining (I) with (II), we complete the proof. 
\end{proof}

The proof of the following result is essentially based on an adaption of Lindenstrauss' argument in \cite[Theorem 1]{Lin}. 

\begin{theorem}\label{thm:bilinear}
Let $X$ be a Banach space. Then the set 
\[
\{ P \in \mathcal{P}_{wu}(^2 X) : AB(P) \in \NRA (^2 X^{**}) \} 
\]
is dense in $\mathcal{P}_{wu}(^2 X)$. 
\end{theorem}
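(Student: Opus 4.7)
The plan is to adapt the Lindenstrauss perturbation scheme, and more specifically its numerical-radius refinement by Acosta--Pay\'a, to the polynomial Aron--Berner setting, with Lemma \ref{lem:AcoPaya} playing the role of the classical perturbation lemma. Fix $P \in \mathcal{P}_{wu}(^2 X)$ with $\nu(P) > 0$ (the case $\nu(P)=0$ being trivial) and $\varepsilon > 0$. Choose positive sequences $(\delta_n)$ with $\sum_n \delta_n^2$ small (so that the iterated perturbations stay $\varepsilon$-close to $P$) and $(\alpha_n)$ with $\alpha_n = o(\delta_n^2)$. Set $P_0 := P$; at each stage choose $(u_n,u_n^*) \in \Pi(X)$ with $|u_n^*(P_n(u_n))| > \nu(P_n) - \alpha_n$, take $\lambda_n \in \mathbb{T}$ so that $u_n^*(P_n(u_n)) = \lambda_n |u_n^*(P_n(u_n))|$, and define
\[
P_{n+1}(x) := P_n(x) + \lambda_n \delta_n^2\, u_n^*(x)^2\, u_n + \lambda_n \delta_n^2\, (u_n^*(B_n(u_n,x)))^2\, u_n,
\]
where $B_n$ is the symmetric bilinear map associated with $P_n$. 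Both perturbation terms are finite-rank $2$-homogeneous polynomials built from bounded linear functionals, so they lie in $\mathcal{P}_{wu}(^2 X)$; since $\mathcal{P}_{wu}(^2 X)$ is closed in $\mathcal{P}(^2 X)$, so does $P_{n+1}$. The bound $\|P_{n+1}-P_n\| \leq 2\delta_n^2\|B_n\|$ keeps $(\|P_n\|)$ uniformly bounded and makes $(P_n)$ norm-Cauchy; its limit $\tilde P \in \mathcal{P}_{wu}(^2 X)$ satisfies $\|\tilde P - P\| < \varepsilon$.

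Since the Aron--Berner extension is isometric and $\tilde P \in \mathcal{P}_{wu}(^2 X)$, the extension $AB(\tilde P)$ is weak-star continuous on bounded subsets of $X^{**}$. By weak-star compactness I pass to a subnet for which $u_n \to u^{**} \in B_{X^{**}}$ and $u_n^* \to u^{***} \in B_{X^{***}}$ in the weak-star topology, and $\lambda_n \to \lambda_\infty \in \mathbb{T}$. Applying Lemma \ref{lem:AcoPaya} at each stage with $(u,u^*,y,y^*) = (u_n, u_n^*, u_{n+1}, u_{n+1}^*)$, $\alpha = \alpha_n$, $\rho = \alpha_{n+1}$ and passing to the limit yields (after a unit-scalar adjustment of $u^{***}$) the pairing condition $(u^{**}, u^{***}) \in \Pi(X^{**})$. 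Next, since $\tilde P$ is weakly uniformly continuous on bounded sets, the image $\tilde P(B_X)$ is relatively norm-compact in $X$; after a further subsequence $\tilde P(u_n) \to z$ in norm for some $z \in X$, and weak-star continuity of $AB(\tilde P)$ identifies $AB(\tilde P)(u^{**})$ with the canonical image of $z$ in $X^{**}$. Combining these, one computes
\[
|u^{***}(AB(\tilde P)(u^{**}))| = \lim_n |u_n^*(\tilde P(u_n))| = \lim_n |u_n^*(P_n(u_n))| = \nu(\tilde P) = \nu(AB(\tilde P)),
\]
where the second equality uses $\|P_n - \tilde P\| \to 0$ and the third the near-attainment condition. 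This shows $AB(\tilde P) \in \NRA(^2 X^{**})$.

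The main obstacle is the extraction of the $\Pi$-condition in the weak-star limit. Lemma \ref{lem:AcoPaya} only provides a lower bound on the sum $|u_n^*(u_{n+1})|^2 + |u_n^*(B_n(u_n,u_{n+1}))|^2$, not on the individual terms, and the second summand can a priori be as large as $\|B_n(u_n,\cdot)\|^2$. To conclude $|u^{***}(u^{**})| = 1$ one must either sharpen the bookkeeping in the proof of Lemma \ref{lem:AcoPaya} by retaining the factor $|u_{n+1}^*(u_n)|$ that was absorbed into the trivial estimate $|y^*(u)|\leq 1$, or augment the inductive construction with an auxiliary perturbation designed to isolate $|u_n^*(u_{n+1})|$. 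A secondary delicate point is that $u^{**}$ need not belong to $X$, so the transition from the weak-star limit to the genuine value of $AB(\tilde P)$ at $u^{**}$ rests essentially on the relative compactness of $\tilde P(B_X)$ afforded by weak uniform continuity.
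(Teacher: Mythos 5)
Your proposal follows the paper's strategy essentially step for step: the same inductive perturbation $P_{n+1}(x)=P_n(x)+\lambda_n\delta_n^2 u_n^*(x)^2u_n+\lambda_n\delta_n^2(u_n^*(B_n(u_n,x)))^2u_n$, the same norm bookkeeping showing $(P_n)$ is Cauchy in $\mathcal{P}_{wu}(^2X)$ with limit $\varepsilon$-close to $P$, and Lemma \ref{lem:AcoPaya} as the engine. Your identification of the value, $|u^{***}(AB(\tilde P)(u^{**}))|=\lim_n|u_n^*(\tilde P(u_n))|=\nu(\tilde P)=\nu(AB(\tilde P))$, via the relative norm-compactness of $\tilde P(B_X)$ and the weak-star continuity of $AB(\tilde P)$, is sound. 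The gap is exactly where you sense one, but your diagnosis is off and the step you actually write down does not work. You apply Lemma \ref{lem:AcoPaya} only to \emph{consecutive} pairs, obtaining a lower bound on $\delta_n^2|u_n^*(u_{n+1})|^2+\delta_n^2|u_n^*(B_n(u_n,u_{n+1}))|^2$, and then ``pass to the limit.'' But weak-star convergence of $u_n^*$ in $X^{***}$ and of $u_{n+1}$ in $X^{**}$ along a common subnet gives no control whatsoever on the diagonal quantities $u_n^*(u_{n+1})$; they need not converge to $u^{***}(u^{**})$. The paper's remedy is the Lindenstrauss double limit: for each fixed $n$, the pair $(x_{n+k},x_{n+k}^*)$ nearly attains $\nu(P_{n+1})$ with defect $\rho=2\|P_{n+k}-P_{n+1}\|\leq 2\sum_{j=n}^{n+k-1}(1+4^2)\delta_j^2$, so the lemma applies for every $k$; one first lets $k\to\infty$ with $n$ fixed, where the quantities $x_n^*(x_{n+k})$ and $x_n^*(B_n(x_n,x_{n+k}))$ do converge because $x_n^*$ and $x_n^*\circ B_n(x_n,\cdot)$ are \emph{fixed} elements of $X^*$ evaluated along the weak-star convergent net $x_{n+k}\to z$; only afterwards does one divide by $\delta_n^2$ and let $n\to\infty$. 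This is also why the paper requires $\delta_n^{-2}\sum_{j>n}(1+4^2)\delta_j^2\to 0$, a condition absent from your setup and indispensable for killing the accumulated defect $\rho$ after the division.

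Relatedly, your ``main obstacle'' paragraph misidentifies the difficulty. That Lemma \ref{lem:AcoPaya} bounds only the \emph{sum} of the two terms is not a defect to be repaired by retaining the factor $|y^*(u)|$ or by adding auxiliary perturbations: carried out with the double-index application, the limit inequality reads $|\phi(z)|^2+|\phi(AB(P_\infty)(z))|^2\geq 1+\nu(P_\infty)^2$, from which the paper concludes simultaneously that $|\phi(z)|=1$ (hence, after a unimodular adjustment, $(z,\phi)\in\Pi(X^{**})$) and that $|\phi(AB(P_\infty)(z))|=\nu(P_\infty)=\nu(AB(P_\infty))$, using the norm convergence $\overline{B_n}(x_n,z)\to AB(P_\infty)(z)$ supplied by polarization and the weak-star uniform continuity of $AB(P_\infty)$. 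So the lemma is used exactly as stated; what your argument is missing is not a sharper lemma but the correct two-index limiting procedure.
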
 

\begin{proof}
Let $P \in \mathcal{P}_{wu}(^2 X)$, $\| P \|=1$ and $B$ be the corresponding symmetric bilinear mapping. Given $0< \eps<1$, choose decreasing sequences $(\alpha_n)$ and $(\delta_n)$ of positive numbers satisfying that 
\begin{equation}\label{eq:delta1}
\sum_{j=1}^n (1+4^2) \delta_j^2 < \eps, \quad  \frac{1}{\delta_n^2} \sum_{j=n+1}^\infty (1+4^2) \delta_j^2 \rightarrow 0 \quad \text{and} \quad \frac{\alpha_n}{\delta_n^2} \rightarrow 0. 
\end{equation} 
Put $P_1 := P$ and inductively construct sequences $(P_n)$ in $\mathcal{P}(^2 X)$ and $(x_n, x_n^*) \in \Pi (X)$ satisfying 
\begin{align*}
&|x_n^* (P_n (x_n))| > \nu (P_n) - \alpha_n, \\
&P_{n+1} (x) := P_n (x) + \lambda_n \delta_n^2 x_n^* (x)^2 x_n + \lambda_n \delta_n^2 (x_n^* (B_n (x, x_n)))^2 x_n, 
\end{align*} 
where $\lambda_n \in \mathbb{T}$ satisfies that $x_n^* (P_n (x_n)) = \lambda_n |x_n^* (P_n (x_n))|$ and $B_n$ is the symmetric bilinear mappings corresponding to $P_n$. Note that 
\[
\|P_2 \| \leq \| P_1 \| + \delta_1^2 + \delta_1^2 \|S_1\|^2 \leq 1 + (1+2^2 ) \delta_1^2. 
\]
Also, 
\begin{align*}
\|P_3\| \leq \|P_2\| + \delta_2^2 + \delta_2^2 \|S_2\|^2 &\leq 1 + (1+2^2 ) \delta_1^2 + \delta_2^2 + \delta_2^2 (2^2 (1 + (1+2^2 ) \delta_1^2 )^2 ) \\
&\leq 1 + (1+2^2 ) \delta_1^2 + (1 + 4^2) \delta_2^2, 
\end{align*} 
since $(1 + (1+2^2 ) \delta_1^2 ) < 2$. In this way, one can verify that 
\[
\|P_{n+1}\| \leq 1 + (1+2^2 ) \delta_1^2 + (1 + 4^2) \delta_2^2 + \cdots + (1 +4^2) \delta_{n}^2 \leq 1 + \sum_{j=1}^n (1+4^2) \delta_j^2 \leq 2 
\]
for each $n \in \mathbb{N}$. This also shows that 
\[
\| P_{n+k} - P_n \| \leq \sum_{j=n}^{n+k-1} (1+\|B_j\|^2) \delta_j^2  \leq \sum_{j=n}^{n+k-1} (1+4^2) \delta_j^2
\]
since $\|B_j\| \leq 2 \|P_j\| \leq 2^2$. Thus, $(P_n)$ converges in norm to some $P_\infty \in \mathcal{P}(^2 X)$, and $\|P_\infty - P \| \leq \eps$. Notice that each $P_{n}$ is weakly uniformly continuous on bounded sets; hence so is $P_\infty$. 

We claim that $AB(P_\infty) \in \NRA (^2 X^{**})$. Note that 
\begin{align*}
|x_{n+k}^* (P_{n+1} (x_{n+k}))| &\geq \nu (P_{n+k}) - \| P_{n+k}-P_{n+1}\| \\
&\geq \nu (P_{n+1}) - 2 \|P_{n+k}-P_{n+1}\| \geq \nu (P_{n+1}) - 2 \sum_{j=n}^{n+k-1} (1+4^2) \delta_j^2. 
\end{align*} 
Applying Lemma \ref{lem:AcoPaya} with $P'=P_{n+1}$, we obtain that 
\[
\delta_n^2 |u_n^* (x_{n+k})|^2 + \delta_n^2 |x_n^* (B_n(x_n, x_{n+k}))|^2 + 2 \sum_{j=n}^{n+k-1} (1+4^2) \delta_j^2 \geq -\alpha_n + \delta_n^2 + \delta_n^2 (\nu(P_n)-\alpha_n)^2. 
\] 
Let $z$ and $\phi$ be cluster points of the sequences $(x_n)$ and $(x_n^*)$ in the weak-star topologies of $X^{**}$ and $X^{***}$, respectively. Letting $k \rightarrow \infty$, we have 
\begin{equation}\label{eq:delta_n}
\delta_n^2 |z(u_n^*)|^2 + \delta_n^2 |[ \overline{B_n} (x_n, z)] (x_n^*) |^2 + 2 \sum_{j=n}^{\infty} (1+4^2) \delta_j^2 \geq -\alpha_n + \delta_n^2 + \delta_n^2 (\nu(P_n)-\alpha_n)^2, 
\end{equation} 
where $\overline{B_n}$ is the canonical extension of $B_n$. 
Note from the polarization inequality \cite[Theorem 2.2]{Mujicabook} that $\overline{B_n}$ converges in norm to $\overline{B_\infty}$, where $B_\infty$ is the symmetric bilinear mapping corresponding to $P_\infty$ and $\overline{B_{\infty}}$ is its canonical extension. 

Notice that $AP(P_\infty)$ is uniformly continuous on $(B_{X^{**}}, w^*)$. Applying the classical polarization formula \cite[Theorem 1.10]{Mujicabook}, we observe that $\overline{B_\infty} (x_n, z)$ converges in norm to $\overline{B_\infty} (z,z) = AB(P_\infty) (z)$. By dividing $\delta_n^2$ in \eqref{eq:delta_n}, letting $n \rightarrow \infty$ and combining it with \eqref{eq:delta1}, we obtain that 
\begin{equation}\label{eq:phi_z_AB_P}
|\phi(z)|^2 + |\phi (AB(P_\infty) (z))|^2 \geq 1 + \nu(P_\infty)^2. 
\end{equation}
This shows that $|\phi(z)| = 1$ and $|\phi (AB(P_\infty) (z))| = \nu(P_\infty)$. Since $\nu(AB(P_\infty)) = \nu(P_\infty)$ \cite[Corollary 2.14]{CGKM2006}, we conclude that $AB(P_\infty)$ attains its numerical radius. 
\end{proof}

\begin{remark}
If we denote by $\mathcal{P}_{wsc}(^N X)$ the space of {weakly sequentially continuous} $N$-homogeneous polynomials (for its definition, see the paragraph before Theorem \ref{theorem:CAP+NRA-fin-dim-poly}), then we have the following:
\[
\mathcal{P}_{wu} (^N X) \subseteq \mathcal{P}_{wsc}(^N X) \subseteq \mathcal{P} (^N X).
\]
We do not know if we can replace $\mathcal{P}_{wu} (^N X)$ in Theorem \ref{thm:bilinear} by $\mathcal{P}_{wsc}(^N X)$, since for a given $P \in \mathcal{P}_{wsc}(^N X)$, its extension $AB(P) \in \mathcal{P}(^N X^{**})$ is not necessarily weak-star to norm sequentially continuous (hence the inequality in \eqref{eq:phi_z_AB_P} is not clear). As a matter of fact, it can be deduced from \cite[Example 1.9]{Zal} that there exist a Banach space $X$ and a weakly sequentially continuous $2$-homogeneous polynomial $P$ on $X$ such that $AB(P)$ is not weak-star to norm sequentially continuous. 
\end{remark}

It is observed in \cite{CLM2012} that if the dual space of a Banach space $X$ is separable and has the approximation property, then for any natural number $N \in \mathbb{N}$, the set of $N$-homogeneous polynomials from $X$ to a dual Banach space $Y^*$ whose Aron-Berner extensions attain their norms is dense. Indeed, their idea was to use the integral representation for elements in tensor product spaces by identifying  polynomials between $X$ and $Y^*$ with elements in $C(B_{X^{**}} \times B_{Y^{**}})$ and using the Riesz representation theorem \cite[Theorem 2.2]{CLM2012} from the compactness of $B_{X^{**}} \times B_{Y^{**}}$. Here, $B_{X^{**}}$ and $B_{Y^{**}}$ are endowed with their weak-star topologies. In order to adapt this to our situation, it would be natural to consider first the set $\Pi (X^{*})$ instead of $B_{X^{**}} \times B_{Y^{**}}$. However, the set $\Pi (X^*)$ cannot be a compact subset of $B_{X^*} \times B_{X^{**}}$, both are endowed with their weak-star topologies, unless $X$ is finite dimensional as the following result shows.  

\begin{proposition}
Let $X$ be a Banach space.
\begin{enumerate}
\setlength\itemsep{0.3em}
\item 
The set $\Pi (X^*)$ is a compact subset of $B_{X^*} \times B_{X^{**}}$ if and only if $X$ is finite  dimensional, where both $B_{X^*}$ and $B_{X^{**}}$ are endowed with their weak-star topologies.   
\item The set $\Pi (X)$ is a compact subset of $B_X \times B_{X^*}$ if and only if $X$ is finite  dimensional, where $B_X$ and $B_{X^*}$ are endowed with the weak topology and weak-star topology, respectively.
\end{enumerate} 
\end{proposition}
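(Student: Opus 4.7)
The ``if'' direction is immediate in both (1) and (2): when $X$ is finite-dimensional, all relevant topologies on $X$, $X^*$, and $X^{**}$ coincide with the norm topology, and $\Pi(X^*)$ (resp.\ $\Pi(X)$) is trivially a closed bounded subset of the corresponding product, hence compact.

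For the ``only if'' direction, the central observation is that in both parts the coordinate projection onto the first factor is continuous, by the very definition of the product topology, and its restriction to $\Pi(X^*)$ (resp.\ $\Pi(X)$) is a surjection onto $S_{X^*}$ (resp.\ $S_X$); surjectivity is a direct consequence of the Hahn-Banach theorem. Consequently, if $\Pi(X^*)$ (resp.\ $\Pi(X)$) were compact, then its continuous image $S_{X^*}$ would be weak-star compact in $B_{X^*}$ (resp.\ $S_X$ would be weakly compact in $B_X$) and hence, being a compact subset of a Hausdorff space, weak-star closed (resp.\ weakly closed). Thus it suffices to show, whenever $X$ is infinite-dimensional, that $S_{X^*}$ is not weak-star closed in $B_{X^*}$ and $S_X$ is not weakly closed in $B_X$.

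For (1), the Josefson-Nissenzweig theorem applied to $X$ produces a sequence $(x_n^*) \subseteq S_{X^*}$ converging weak-star to $0$, and since $0 \notin S_{X^*}$ this immediately shows that $S_{X^*}$ is not weak-star closed. For (2) the same route is unavailable: an infinite-dimensional Banach space need not contain a weakly null sequence on its unit sphere (for instance $\ell_1$, by Schur's theorem). The plan is to replace Josefson-Nissenzweig by the classical fact that $\overline{S_X}^{\,w} = B_X$ whenever $\dim X = \infty$. The verification is elementary: a basic weak neighborhood of a point $\phi \in B_X$ with $\|\phi\| < 1$ is determined by finitely many functionals $x_1^*, \ldots, x_n^* \in X^*$ whose common kernel has finite codimension and is therefore infinite-dimensional, so one can choose $z$ in that kernel together with a scalar $t$ with $\|\phi + t z\| = 1$ (by applying the intermediate value theorem to $t \mapsto \|\phi + t z\|$). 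In particular, $0$ lies in the weak closure of $S_X$ but not in $S_X$, contradicting weak closedness.

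The main (and essentially only) technical subtlety is in part (2): the lack of weakly null unit sequences in general Banach spaces prevents a direct transfer of the argument used for (1) and forces the use of the weak density of $S_X$ in $B_X$ as a substitute. Once this substitute is in hand, both parts follow from the same two-step scheme---projection onto the first factor and Hausdorffness of the relevant topology.
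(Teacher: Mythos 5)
Your proof is correct, and for part (2) it follows a genuinely different and more elementary route than the paper. For part (1) the two arguments essentially coincide: both rest on the Josefson--Nissenzweig theorem, the paper extracting a convergent subnet of pairs $(x_\alpha^*,x_\alpha^{**})$ and reading off a contradiction from the norm of the limit of the first coordinates (it normalizes the weak-star limit to have norm $1/2$, you use limit $0$; both work), which is your ``the continuous image $S_{X^*}$ of a compact set would be weak-star closed'' argument in different packaging. For part (2), however, the paper works in the \emph{second} coordinate: it again takes a weak-star convergent sequence $(x_n^*)\subseteq S_{X^*}$ with non-unit limit, chooses almost-norming vectors $x_n$, and invokes the Bishop--Phelps--Bollob\'as theorem to replace $(x_n,x_n^*)$ by genuine elements $(y_n,y_n^*)\in\Pi(X)$ before passing to a subnet. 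You instead project onto the \emph{first} coordinate (surjective onto $S_X$ by Hahn--Banach) and use only the classical fact that $\overline{S_X}^{\,w}=B_X$ when $\dim X=\infty$, so that $0$ witnesses the failure of weak closedness of $S_X$; your justification via finite-codimensional kernels and the intermediate value theorem is complete. This dispenses with both Josefson--Nissenzweig and Bishop--Phelps--Bollob\'as in part (2) at no cost, while the paper's method has the mild virtue of treating (1) and (2) by one uniform scheme.
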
 

\begin{proof}
(1): As ``If" part is clear, assume to the contrary that $X$ is infinite dimensional. By applying the Josefson-Nissenzweig theorem, take $(x_n^*) \subseteq S_{X^*}$ so that $(x_n^*)$ converges weak-star to some $x_\infty^* \in X^*$ with $\|x_\infty^*\| = 1/2$. Take $x_n^{**} \in S_{X^{**}}$ so that $x_n^{**} (x_n^*) = 1$ for each $n \in \mathbb{N}$. If $\Pi (X^*)$ were compact, then we would have a subnet $(x_\alpha^*, x_\alpha^{**})$ which converge to some $(z^*, z^{**})$ in $\Pi (X^*)$. This implies, in particular, that $z^* = x_\infty^*$ which is a contradiction.

(2): We prove the ``only if" part. Assuming $X$ is infinite dimensional, take a sequence $(x_n^*) \subseteq S_{X^*}$ so that $(x_n^*)$ converges weak-star to some $x_\infty^* \in X^*$ with $\|x_\infty^*\| = 1/2$ as above. Take $x_n \in S_X$ such that $|1- x_n^* (x_n)| < (1/\sqrt{2}n)^2$ for each $n \in \mathbb{N}$. By the Bishop-Phelps-Bollob\'as theorem \cite{Bol}, there exists $(y_n, y_n^*) \in \Pi (X)$ such that $\|y_n -x_n\| < 1/n + (1/n)^2$ and $\| y_n^* - x_n^* \| < 1/n$. If $\Pi (X)$ were compact, then there would be a subnet $(y_\alpha, y_\alpha^{*})$ which converge to some $(z, z^{*})$ in $\Pi (X)$. This implies that $z^* = x_\infty^*$ which contradicts $\|z^*\| = 1$.
\end{proof} 

\subsection*{Acknowledgment}
The author is grateful to Sheldon Dantas, Miguel Mart\'in and \'Oscar Rold\'an.

\end{document}